\documentclass[11pt]{amsart}

\usepackage[draft]{changes}

\definechangesauthor[color=red]{pe}
\definechangesauthor[color=red]{xm}
\definechangesauthor[color=red]{hx}
\newcommand{\stkout}[1]{\ifmmode\text{\sout{\ensuremath{#1}}}\else\sout{#1}\fi}
\setdeletedmarkup{\stkout{#1}}



\usepackage{amssymb}

\usepackage{graphics}
\usepackage{graphicx}

\usepackage{amsmath}
\usepackage{amsthm}
\usepackage{amsfonts}
\usepackage{mathtools}
\usepackage{xcolor}
\usepackage[english]{babel}
\usepackage[margin=1.0in]{geometry}
\usepackage[colorlinks=true,
        linkcolor=blue]{hyperref}
\usepackage{bbm}
\usepackage{verbatim}
\usepackage{extarrows}
\usepackage{blkarray}

\usepackage[T1]{fontenc}

\parindent 0 mm
\parskip 3 mm

\numberwithin{equation}{section}

\newtheorem{prop}{Proposition}
\newtheorem{lemma}[prop]{Lemma}

\newtheorem{thm}[prop]{Theorem}

\numberwithin{prop}{section}

\newtheorem{defn}[prop]{Definition}
\theoremstyle{definition}

\newtheorem{rmk}[prop]{Remark}

\definecolor{c1}{rgb}{0.2,0.4,0.5}
\definecolor{c2}{rgb}{0.1,0.3,0.5}
\definecolor{c3}{rgb}{0.2,0.7,0.5}
\usepackage{tikz}

\newcommand{\oo}[1]{\overline{#1}}

\newcommand{\bC}{\mathbb{C}}
\newcommand{\bR}{\mathbb{R}}
\newcommand{\bB}{\mathbb{B}}

\newcommand{\dbar}{\oo\partial}

\newcommand{\id}{\mathrm{id}}

\DeclareMathOperator{\Aut}{Aut}

\begin{document}

\title[]{On the classification of normal Stein spaces and finite ball quotients with Bergman-Einstein metrics}

\begin{abstract}
In this paper, we study the Bergman metric of a finite ball quotient $\mathbb{B}^n/\Gamma$, where $\Gamma\subseteq \Aut(\bB^n)$ is a finite, fixed point free, abelian group. We prove that this metric is K\"ahler--Einstein if and only if $\Gamma$ is trivial, i.e., when the ball quotient $\mathbb{B}^n/\Gamma$ is the unit ball $\bB^n$ itself. As a consequence, we establish a characterization of the unit ball among normal Stein spaces with isolated singularities and abelian fundamental groups in terms of the existence of a Bergman-Einstein metric.
\end{abstract}

\subjclass[2010]{32F45, 32Q20, 32E10,32C20}


\author [Ebenfelt]{Peter Ebenfelt}
\address{Department of Mathematics, University of California at San Diego, La Jolla, CA 92093, USA} \email{{pebenfelt@ucsd.edu}}

\author[Xiao]{Ming Xiao}
\address{Department of Mathematics, University of California at San Diego, La Jolla, CA 92093, USA}
\email{{m3xiao@ucsd.edu}}

\author [Xu]{Hang Xu}
\address{Department of Mathematics, University of California at San Diego, La Jolla, CA 92093, USA}
\email{{h9xu@ucsd.edu}}

\thanks{The first and second authors were supported in part by the NSF grants DMS-1900955 and DMS-1800549, respectively.}

\maketitle

\section{Introduction}

Since the introduction of the Bergman kernel in \cite{Bergman1933, Bergman1935} and the subsequent
groundbreaking work of Kobayashi \cite{Ko} and Fefferman \cite{Fe}, the study of the Bergman kernel and metric has been a central subject in several complex variables and complex geometry. A general problem of fundamental importance seeks to characterize complex analytic spaces in terms of geometric properties of their Bergman metrics. The Bergman kernel of the unit ball $\bB^n\subseteq \bC^n$, for example, is explicitly known,
\begin{equation*}
K_{\bB^n}(z,\bar w)=\frac{n!}{\pi^n}\frac{1}{(1-\left<z,\bar w\right>)^{n+1}},\quad \left<z,\bar w\right>=\sum_{j=1}^n z_j\bar w_j,
\end{equation*}
and it is routine to verify that the Bergman metric,
\begin{equation*}
(g_{\bB^n})_{i\bar j}=\frac{\partial^2}{\partial{z_i}\partial{\bar z_j}}\log K_{\bB^n}(z,\bar z),
\end{equation*}
is {\it K\"ahler-Einstein}, i.e., has Ricci curvature equal to a constant multiple of the metric tensor; indeed, the Bergman metric of the unit ball has constant holomorphic sectional curvature, which implies the K\"ahler-Einstein property.
A well-known conjecture posed by S.-Y. Cheng \cite{Ch} in 1979 asserts that the Bergman metric of a bounded, strongly pseudoconvex domain in $\bC^n$ with smooth boundary is K\"ahler-Einstein if and only if the domain is biholomorphic to the unit ball $\mathbb{B}^n$. There are also variations of this conjecture in terms of other canonical metrics; see, e.g., Li \cite{Li05, Li09, Li16} and references therein.

The aforementioned Cheng Conjecture was confirmed by S. Fu--B. Wong \cite{FuWo} and S. Nemirovski--R. Shafikov \cite{NeSh} in the two dimensional case and by X. Huang and the second author \cite{HX} in higher dimensions. X. Huang and X. Li \cite{HuLi} recently generalized this result to Stein manifolds with strongly pseudoconvex boundary as follows: {\em The only Stein manifold with smooth and compact strongly pseudoconvex boundary for which the Bergman metric is K\"ahler-Einstein is the unit ball $\bB^n$ (up to biholomorphism)}.
These results lead naturally to the question of whether a similar characterization of $\bB^n$ holds in the setting of normal Stein spaces {\em with possible singularities}; see Conjecture 1.4 in \cite{HX20}. In this paper, we provide strong evidence that this is the case. The following two theorems establish the first results that the authors are aware of characterizing the unit ball among normal Stein spaces with possible singularities in terms of the existence of a Bergman-Einstein metric.

\begin{thm}\label{Thm 1}
	Let $V$ be an $n$-dimensional Stein space in $\bC^N$ with $N>n\geq 2$, and $G=V\cap \mathbb{B}^N$. Assume that every point in $\oo{G}$ is a smooth point of $V$, except for finitely many normal singularities in $G$, and that $G$ has a smooth boundary.  Then the Bergman metric of $G$ is K\"ahler-Einstein if and only if $G$ is biholomorphic to $\mathbb{B}^n$.
\end{thm}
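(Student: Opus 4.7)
The easy direction is immediate from the biholomorphic invariance of the Bergman kernel. For the converse, assume the Bergman metric $g_G$ on $G^{\mathrm{reg}}$ (the smooth part of $G$) is K\"ahler-Einstein. The plan is to reduce to the paper's companion result on finite ball quotients by showing that, under the K-E assumption, $G$ must be realized as $\bB^n/\Gamma$ for some finite, fixed-point-free subgroup $\Gamma\subset \Aut(\bB^n)$; the ball-quotient theorem then forces $\Gamma$ to be trivial.

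First I would exploit the smooth, strongly pseudoconvex boundary $\partial G$. Since $\partial G$ lies entirely in the regular part of $V$, Fefferman's asymptotic expansion of $K_G$ is valid in a collar of $\partial G$. Following the strategy of Huang-Xiao \cite{HX} (in the bounded-domain case) and Huang-Li \cite{HuLi} (in the Stein-manifold case), the K\"ahler-Einstein hypothesis forces the logarithmic coefficient of Fefferman's expansion to vanish to sufficient order along $\partial G$, and via the Chern-Moser normal form this yields CR-sphericity: $\partial G$ is locally CR-equivalent to the sphere $S^{2n-1}$. The CR developing-map construction then produces a locally biholomorphic map $F$ from a one-sided collar $U\subset G^{\mathrm{reg}}$ of $\partial G$ into $\bB^n$, sending $\partial G$ into $\partial \bB^n$.

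The second step is to extend $F$ across all of $G^{\mathrm{reg}}$. I would analytically continue $F$ along paths in $G^{\mathrm{reg}}$; the rigidity of CR-sphericity (Chern-Moser) guarantees that any two local continuations differ by an element of $\Aut(\bB^n)$. Because $G$ has only finitely many isolated normal singularities, Riemann-type removable-singularity theorems let continuations traverse punctured neighborhoods of the singular points while remaining locally biholomorphic on $G^{\mathrm{reg}}$. Passing to the universal cover $\tilde G^{\mathrm{reg}}$ then yields a biholomorphism $\tilde G^{\mathrm{reg}}\cong \bB^n$, exhibiting $G^{\mathrm{reg}}=\bB^n/\Gamma$ for a finite, fixed-point-free group $\Gamma\subset \Aut(\bB^n)$ acting properly discontinuously (finiteness following from the finiteness of the singular set together with the compactness of $\partial G$). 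Invoking the paper's companion ball-quotient theorem then forces $\Gamma=\{\id\}$, and hence $G\cong \bB^n$.

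The main obstacle is the second step. Extending the developing map is delicate because one must simultaneously (i) control the monodromy of analytic continuation around generators of $\pi_1(G^{\mathrm{reg}})$, (ii) analyze the behavior of $F$ as one approaches each singular point, using that isolated normal singularities behave well under pullback of holomorphic data, and (iii) verify that the resulting deck-transformation group satisfies the hypotheses of the ball-quotient theorem. A key technical input is the interplay between rigidity of CR-spherical structures and completeness of the Bergman metric near the singularities, which together rule out degeneration or escape of the developing map before reaching the singular locus.
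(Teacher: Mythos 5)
Your outline of Step 1 (sphericity of $\partial G$ from the K\"ahler--Einstein hypothesis via Fefferman's expansion) and of the passage to a realization $G\cong \bB^n/\Gamma$ matches the paper in spirit, although the paper does not use a developing-map/monodromy argument: since $\partial G\subseteq S^{2N-1}$ is contained in a real-algebraic hypersurface, it invokes Huang's algebraicity theorem (Corollary 3.3 in \cite{Hu}) to produce an algebraic CR covering map $F:S^{2n-1}\to\partial G$ directly, and then extends $F$ to a proper branched covering $\bB^n\to G$ as in \cite{EbenfeltXiaoXu2020algebraicity}. That route sidesteps the delicate continuation issues you flag (and also the inaccuracy in your claim that the universal cover of $G^{\mathrm{reg}}$ is $\bB^n$: for nontrivial fixed-point-free $\Gamma\subseteq U(n)$ the regular part is $(\bB^n-\{0\})/\Gamma$, whose universal cover is $\bB^n-\{0\}$, so the covering space must be identified more carefully).

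The genuine gap, however, is at the very last step. The companion ball-quotient theorem (Theorem \ref{main thm}) is proved only for \emph{abelian} fixed-point-free $\Gamma$; the general case is open. Your proposal invokes it for an arbitrary finite fixed-point-free $\Gamma\subset\Aut(\bB^n)$, so the conclusion $\Gamma=\{\id\}$ does not follow. This is precisely where the hypothesis $G=V\cap\bB^N$ (never used in your argument) enters the paper's proof: the covering $F$ induces a nonconstant CR map from the spherical space form $S^{2n-1}/\Gamma$ into the sphere $S^{2N-1}$, and by Theorem 8 of D'Angelo--Lichtblau \cite{DALi} the existence of such a map forces $\Gamma$ to be conjugate to one of a short list of cyclic subgroups of $U(n)$ --- in particular abelian --- after which Theorem \ref{Thm 2} (equivalently, Theorem \ref{main thm}) applies. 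Without this reduction to the abelian case, your argument cannot close.
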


\begin{thm}\label{Thm 2}
	Let $V$ be an $n$-dimensional Stein space in $\bC^N$ with $N>n\geq 2$ and $\Omega \subseteq \bC^N$ a bounded strongly pseudoconvex domain with smooth and real-algebraic boundary. Write $G=V \cap \Omega.$ Assume every point in $\oo{G}$ is a smooth point of $V$, except for finitely many normal singularities in $G$, and that $G$ has a smooth boundary. Then the following are equivalent:
	\begin{itemize}
		\item[(i)] $G$ is biholomorphic to $\mathbb{B}^n$.
		\item[(ii)] The fundamental group of the regular part of $G$ is abelian and the Bergman metric of $G$ is K\"ahler-Einstein.
	\end{itemize}
\end{thm}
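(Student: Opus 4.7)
The direction (i)$\Rightarrow$(ii) is immediate: if $G \cong \bB^n$, then $G_{\text{reg}} = G$ is simply connected and the Bergman metric of $\bB^n$ is K\"ahler-Einstein by the explicit formula above. For the converse, the plan is to combine the boundary rigidity techniques of \cite{HX} and \cite{HuLi} with a universal-cover argument to realize $G$ as a finite abelian ball quotient, and then invoke the main theorem of the present paper to conclude that this quotient must be trivial.

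To begin, since the singularities of $G$ are isolated and normal, every $L^{2}$-holomorphic $(n,0)$-form on $G_{\text{reg}}$ extends across them, so the Bergman kernel and metric of $G$ coincide on $G_{\text{reg}}$ with those of the smooth Stein manifold $G_{\text{reg}}$. Near a smooth boundary point $p \in \partial G \subset \partial \Omega$, the Fefferman asymptotic expansion, the K\"ahler-Einstein hypothesis, and the real-algebraicity of $\partial\Omega$ together force the CR structure of $\partial G$ to be spherical to infinite order at $p$, by exactly the type of analysis carried out in \cite{HX} and \cite{HuLi}. This produces a local biholomorphism from a one-sided neighborhood of $p$ in $G_{\text{reg}}$ into $\bB^n$ sending $\partial G$ into $\partial\bB^n$.

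The next step is to globalize this local biholomorphism. I would pass to the universal cover $\pi \colon \widetilde{G_{\text{reg}}} \to G_{\text{reg}}$; the pulled-back Bergman metric remains K\"ahler-Einstein, and the lifted boundary $\pi^{-1}(\partial G)$ inherits the same smooth, strongly pseudoconvex, real-algebraic structure. Lifting the local boundary biholomorphism and analytically continuing along the lifted boundary (using strong pseudoconvexity and connectedness), one hopes to obtain a global biholomorphism $\Phi \colon \widetilde{G_{\text{reg}}} \to \bB^n$ in the spirit of the arguments of \cite{HuLi}. A crucial subpoint is that the deck group $\Gamma := \pi_{1}(G_{\text{reg}})$ must be \emph{finite}: using that $\Gamma$ is abelian, together with the compactness of $\partial G$ and the fact that $\Gamma$ acts freely and cocompactly on the lifted boundary, one should rule out the possibility of an infinite abelian subgroup of $\Aut(\bB^n)$ fitting into this picture.

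With $\widetilde{G_{\text{reg}}} \cong \bB^n$ and $\Gamma$ a finite abelian fixed-point-free subgroup of $\Aut(\bB^n)$, the biholomorphism $\Phi$ descends to give $G_{\text{reg}} \cong \bB^n/\Gamma$, whose Bergman metric is therefore K\"ahler-Einstein. The main theorem of this paper, announced in the abstract, then forces $\Gamma = \{e\}$, so $G_{\text{reg}} \cong \bB^n$; since $\bB^n$ is itself normal with no isolated singular points to be added, this yields $G \cong \bB^n$. The principal obstacle I foresee is the globalization step: adapting the analytic continuation and algebraicity arguments of \cite{HuLi} across the normal singular ends of $G_{\text{reg}}$, and proving finiteness of the deck group in that singular setting. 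Once these two technical points are in place, the reduction to the paper's ball-quotient theorem is essentially formal.
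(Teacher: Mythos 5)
Your outline matches the paper's strategy at the endpoints --- sphericality of $\partial G$ from the K\"ahler--Einstein hypothesis via Fefferman's expansion, and the final reduction to the ball-quotient theorem (Theorem \ref{main thm}) using the abelian hypothesis on $\pi_1(G_{\mathrm{reg}})$ --- but the middle step, which you yourself flag as the ``principal obstacle,'' is a genuine gap, and it is precisely where the real-algebraicity of $\partial\Omega$ does its work. You invoke algebraicity only to help prove sphericality, whereas in fact sphericality follows from the K\"ahler--Einstein condition and the Fefferman expansion alone; the algebraicity is what globalizes the local spherical structure. The paper applies Corollary 3.3 of \cite{Hu}: a compact, spherical, strongly pseudoconvex hypersurface contained in a real-algebraic hypersurface is the image of an \emph{algebraic} CR map $F\colon S^{2n-1}\to\partial G$ which is a \emph{finite} covering, so that $\partial G\cong S^{2n-1}/\Gamma$ with $\Gamma\subseteq U(n)$ finite and fixed point free. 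One then extends $F$ to a proper holomorphic branched covering $\bB^n\to G$ (as in Step 3 of Section 5 of \cite{EbenfeltXiaoXu2020algebraicity}), realizing $G$ as $\bB^n/\Gamma$. This single result delivers both of the points you leave open: the global uniformization and the finiteness of the deck group.

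Your proposed substitute --- passing to the universal cover of $G_{\mathrm{reg}}$ and analytically continuing a local boundary chart --- is not carried out and would need repair even in outline. The universal cover of $G_{\mathrm{reg}}$ is not $\bB^n$ but $\bB^n$ minus the (discrete) preimage of the singular set, and more importantly, analytic continuation of a local CR-spherical chart around a compact spherical hypersurface generally produces only a multivalued developing map; without algebraicity (or some monodromy control) there is no reason the continuation closes up into a single-valued covering by the sphere, nor that the resulting deck group is finite. Your suggested finiteness argument (``abelian plus cocompact free action on the lifted boundary'') presupposes that the lifted boundary is already identified with $S^{2n-1}$ and that $\Gamma$ sits inside $\Aut(\bB^n)$, which is exactly what has not yet been established at that stage. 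So the reduction to Theorem \ref{main thm} is correct in spirit, but the proof is incomplete without Huang's algebraicity theorem or an equivalent replacement.
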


\begin{rmk}
As we will see in the proof (Section 3), if $G$ itself is assumed to be bounded in Theorem \ref{Thm 2}, then the boundedness assumption on $\Omega$ can be dropped.
\end{rmk}

 We shall utilize the work of D'Angelo--Lichtblau \cite{DALi} (see also F. Forstneri\v{c} \cite{Fo86}) and X. Huang \cite{Hu}, as well as methods from \cite{HX}, \cite{HuLi} and \cite{EbenfeltXiaoXu2020algebraicity} to reduce the proofs of Theorems \ref{Thm 1} and \ref{Thm 2} to that of the following theorem, which is one of the main results in the paper.

\begin{thm}\label{main thm}
	Let $\Gamma$ be a finite abelian subgroup of $\Aut(\mathbb{B}^n)$, $n\geq 2$, and assume $\Gamma$ is fixed point free. Then the Bergman metric of $\mathbb{B}^n/\Gamma$ is K\"ahler-Einstein if and only if $\Gamma$ is the trivial group consisting of the identity element.
\end{thm}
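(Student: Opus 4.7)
The plan is to reduce the Kähler--Einstein condition on $\mathbb{B}^n/\Gamma$ to a pointwise identity on its universal cover $\mathbb{B}^n$, and then observe that this identity fails at the origin. As a first step, I would normalize: by Cartan's fixed-point theorem the compact group $\Gamma$, acting as Bergman isometries of the Hadamard manifold $\mathbb{B}^n$, has a common fixed point, which I may assume to be $0$ after conjugation in $\Aut(\mathbb{B}^n)$. Then $\Gamma \subseteq U(n)$; since $\Gamma$ is abelian, a further unitary change of coordinates makes $\Gamma$ diagonal, with every $\gamma = \text{diag}(\lambda_1^\gamma,\ldots,\lambda_n^\gamma)$. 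The fixed-point-free hypothesis then says that no $\lambda_i^\gamma$ equals $1$ for any non-identity $\gamma \in \Gamma$; equivalently, $\Gamma$ acts freely on $\mathbb{B}^n\setminus\{0\}$, and $[0]$ is the unique singular point of $\mathbb{B}^n/\Gamma$.

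Identifying $A^2(\mathbb{B}^n/\Gamma)$ with the $\Gamma$-invariant part of $A^2(\mathbb{B}^n)$ (with the usual volume factor $|\Gamma|$) and using the orthogonal basis $\{z^\alpha\}_{\alpha \in S}$ with $S = \{\alpha \in \mathbb{Z}_{\ge 0}^n : \prod_i(\lambda_i^\gamma)^{\alpha_i}=1\ \forall \gamma\in\Gamma\}$ together with the classical norm formula $\|z^\alpha\|^2_{A^2(\mathbb{B}^n)} = \pi^n\alpha!/(n+|\alpha|)!$, one obtains the lifted Bergman kernel
\[
\tilde K(z,\bar z) \;=\; \sum_{\gamma\in\Gamma} K_{\mathbb{B}^n}(z, \overline{\gamma z}) \;=\; \frac{|\Gamma|}{\pi^n}\sum_{\alpha\in S}\frac{(n+|\alpha|)!}{\alpha!}\,|z^\alpha|^2,
\]
a real-analytic, strictly positive, $\Gamma$-invariant function on all of $\mathbb{B}^n$. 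Now suppose the Bergman metric $g$ on $\Reg(\mathbb{B}^n/\Gamma)$ is Kähler--Einstein, say $\Ric(g) = \mu g$. Pulling back, $\pi^*g = i\partial\bar\partial\log \tilde K$ is positive-definite on $\mathbb{B}^n\setminus\{0\}$ and satisfies the same equation, so the function $\tilde h := \log\det(\pi^*g)_{i\bar j} + \mu\log\tilde K$ is pluriharmonic on $\mathbb{B}^n\setminus\{0\}$. Since $n\geq 2$ the punctured ball is simply connected, so $\tilde h = \Real f$ for some holomorphic $f$, and by Hartogs' theorem $f$ extends holomorphically across $0$. Hence $\tilde h$ extends to a smooth function on all of $\mathbb{B}^n$, and exponentiating yields the globally valid identity
\[
\det(\pi^*g)_{i\bar j}(z) \;=\; e^{\tilde h(z)}\,\tilde K(z)^{-\mu}, \qquad z\in \mathbb{B}^n,
\]
whose right-hand side is smooth and strictly positive everywhere.

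To finish, I would evaluate both sides at $z=0$. Since $\partial_i\bar\partial_j(z^\alpha\bar z^\alpha)|_{z=0} = \delta_{ij}\delta_{\alpha,e_i}$ and $\partial_k\tilde K(0)=0$, a direct computation yields
\[
(\pi^*g)_{i\bar j}(0) \;=\; \frac{\partial_i\bar\partial_j\tilde K(0)}{\tilde K(0)} \;=\; (n+1)\,\mathbf{1}[e_i\in S]\,\delta_{ij}.
\]
If some $e_i$ lay in $S$, every $\gamma\in\Gamma$ would satisfy $\lambda_i^\gamma = 1$ and hence fix the segment $\{t e_i: t\in [0,1)\}$ pointwise; for non-identity $\gamma$ this contradicts the FPF assumption. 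Therefore $e_i\notin S$ for every $i$, forcing $(\pi^*g)(0)=0$ and in particular $\det(\pi^*g)_{i\bar j}(0)=0$, whereas $e^{\tilde h(0)}\tilde K(0)^{-\mu}>0$. This contradicts the displayed identity, and hence $\Gamma$ must be trivial.

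The main obstacle I anticipate is the globalization step in the middle paragraph: one needs the simple-connectedness of $\mathbb{B}^n\setminus\{0\}$ (which forces $n\ge 2$, matching the hypothesis) together with Hartogs extension to promote the pluriharmonic potential $\tilde h$ to a smooth function on the whole ball. Without this, the factor $e^{\tilde h}$ would be only locally well-defined on the punctured ball, and the comparison at the singular fibre could not be carried through.
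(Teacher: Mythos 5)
Your overall strategy — lift the K\"ahler--Einstein condition to $\bB^n\setminus\{0\}$, promote the pluriharmonic potential to a global one via simple connectivity and Hartogs, and derive a contradiction at the origin — is the same as the framework used in Section~4 of the paper. However, there is a genuine gap in the formula you use for the lifted Bergman kernel, and this gap removes the hard part of the problem.

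The Bergman space of the complex analytic space $\bB^n/\Gamma$ consists of $L^2$ holomorphic $(n,0)$-\emph{forms}, not functions. Pulling an $(n,0)$-form back through $\gamma\in U(n)$ introduces the Jacobian factor $\det\gamma$: a form $z^\alpha\,dz_1\wedge\cdots\wedge dz_n$ is $\Gamma$-invariant precisely when $\prod_i(\lambda_i^\gamma)^{\alpha_i+1}=1$ for all $\gamma$, not $\prod_i(\lambda_i^\gamma)^{\alpha_i}=1$. Equivalently, the transformation formula gives
\[
\tilde K(z,\bar z)=\frac{n!}{\pi^n}\sum_{\gamma\in\Gamma}\frac{\overline{\det\gamma}}{\bigl(1-\langle z,\overline{\gamma z}\rangle\bigr)^{n+1}}
=\frac{|\Gamma|}{\pi^n}\sum_{\alpha\in S'}\frac{(n+|\alpha|)!}{\alpha!}\,|z^\alpha|^2,
\qquad S'=\Bigl\{\alpha:\prod_i(\lambda_i^\gamma)^{\alpha_i+1}=1\ \forall\gamma\Bigr\},
\]
which differs from your $\tilde K$ by the extra factor $\overline{\det\gamma}$; this is exactly the function $\varphi(z,\bar z)$ in the paper. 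With the correct exponent set $S'$, the index $\alpha=0$ belongs to $S'$ if and only if $\det\gamma=1$ for all $\gamma$, i.e.\ $\Gamma\subseteq SU(n)$. If $\Gamma\not\subseteq SU(n)$ then $\tilde K(0)=0$: the lifted kernel vanishes at the origin, $\log\tilde K$ has a non-pluriharmonic singularity there, and the pullback form $i\partial\bar\partial\log\tilde K$ does not even extend continuously to $z=0$. Consequently the evaluation of $\det(\pi^*g)_{i\bar j}$ at $0$, which is the crux of your final paragraph, is not available, and the contradiction does not go through.

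In the remaining case $\Gamma\subseteq SU(n)$ one does have $\tilde K(0)>0$, and your argument at the origin (with $S$ replaced by $S'$) is essentially correct; but this is exactly Case~I of Section~4.3, which the paper handles in one line (and also remarks can be dispatched by Cheng--Yau uniqueness). The substantive content of Theorem~\ref{main thm} is the case $\Gamma\not\subseteq SU(n)$, i.e.\ $\varphi(0,0)=0$, where the paper must carry out a careful Taylor expansion of the functional equation $J(\varphi)=(n+1)^n\varphi^{n+2}$ at $x=0$ along the slice $z=(z_1,0,\dots,0)$ and prove the combinatorial Lemmas~\ref{lemma combinatorial 1}--\ref{lemma combinatorial 3} to rule out matching lowest-order coefficients. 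Your proposal does not address this regime at all, because the incorrect invariance condition makes it appear that $\tilde K(0)>0$ always holds.
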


Here a subgroup $\Gamma$ of $\Aut(\mathbb{B}^n)$ is called \emph{fixed point free} if the only element $\gamma\in\Gamma$ with a fixed point on $\partial\mathbb{B}^n$ is the identity.
The fixed point free condition on $\Gamma$ guarantees that the quotient space $\mathbb{B}^n/\Gamma$ has smooth boundary (see \cite{Fo86}). Moreover, as we shall see in Section \ref{Sec main thm proof}, an abelian fixed point free finite group $\Gamma$
is in fact cyclic.

To prove Theorem \ref{main thm}, it suffices to show that if $\Gamma$ is not the trivial group, i.e., $\Gamma\neq \{\id\}$, then the Bergman metric is not K\"ahler-Einstein. For that, we shall use the transformation formula for the Bergman kernel under branched holomorphic coverings of complex analytic spaces; see Theorem \ref{BK transformation thm} below. A crucial step in the proof is to reduce the non-Einstein condition to several combinatorial inequalities. The proofs of these combinatorial inequalities are technical and will be given in a separate section; see Section \ref{Sec combinatorial lemma}.

We remark that the analogue of Theorem \ref{main thm} is not true in the case $n=1$. If we denote the unit disk in $\bC$ by
$\mathbb{D}$ ($=\bB^1$), then one readily verifies that any finite subgroup $\Gamma \subseteq \mathrm{Aut}(\mathbb{D})$ must be fixed point free and cyclic. Nevertheless, in this case,
X. Huang and X. Li \cite{HuLi} proved the very interesting result that the Bergman metric of $\mathbb{D}/\Gamma$ always has constant Gaussian curvature, which is equivalent to being K\"ahler-Einstein in the one-dimensional case.


The paper is organized as follows. Section \ref{Sec background} recalls some preliminaries on the Bergman metric and finite ball quotients. In Section \ref{Sec Thm 1 and 2}, we prove that Theorems \ref{Thm 1} and \ref{Thm 2} follow from Theorem \ref{main thm}. Theorem \ref{main thm} is then proved in Section \ref{Sec main thm proof}, except for some combinatorial lemmas used in the proof that are left to Section \ref{Sec combinatorial lemma}.


\section{Preliminaries}\label{Sec background}

\subsection{The Bergman kernel}
In this subsection, we will briefly review some properties of the Bergman kernel and metric on a complex manifold. More details can be found in \cite{Ko} and \cite{KoNo}.

Let $M$ be an $n$-dimensional complex manifold. Let $L^2_{(n,0)}(M)$ denote the space of $L^2$-integrable $(n,0)$-forms on $M,$ equipped with the inner product
\begin{equation}\label{inner product}
(\varphi,\psi)_{L^2(M)}:=i^{n^2}\int_{M}\varphi\wedge\oo{\psi},
\quad \varphi,\psi \in L^2_{(n,0)}(M).
\end{equation}

Define the {\em Bergman space} of $M$  to be
\begin{equation}\label{Bergman space of forms}
A^2_{(n,0)}(M):=\bigl\{\varphi \in L^2_{(n,0)}(M): \varphi \mbox{ is a holomorphic $(n,0)$-form on $M$} \}.
\end{equation}

Assume $A^2_{(n,0)}(M) \neq \{0\}$. Then $A^2_{(n,0)}(M)$ is a separable Hilbert space. Taking any orthonormal basis $\{\varphi_k\}_{k=1}^{q}$ of $A^2_{(n,0)}(M)$ with $1 \leq q \leq \infty$, we define the {\em Bergman kernel (form)} of $M$ to be
\begin{equation*}
K_M(x,\bar{y})=i^{n^2}\sum_{k=1}^{q} \varphi_k(x)\wedge \oo{\varphi_k(y)}.
\end{equation*}
Then, $K_M(x, \bar{x})$ is a real-valued, real-analytic form of degree $(n,n)$ on $M$ and is independent of the choice of orthonormal basis.

The Bergman kernel form remains unchanged if we remove a proper complex analytic subvariety, as the following theorem from \cite{Ko} shows:
\begin{thm}[\cite{Ko}]\label{Kobayashi thm}
	If $M'$ is a domain in an $n$-dimensional complex manifold $M$ and if $M-M'$ is a complex analytic subvariety of $M$ of complex dimension $\leq n-1$, then
	\begin{equation*}
		K_M(x,\bar{y})=K_{M'}(x,\bar{y}) \quad \mbox{ for any } x, y \in M'.
	\end{equation*}
\end{thm}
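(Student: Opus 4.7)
The plan is to set up a norm-preserving Hilbert space isomorphism between $A^2_{(n,0)}(M)$ and $A^2_{(n,0)}(M')$, and then conclude that the two Bergman kernels, being built from compatible orthonormal bases, agree on $M' \times M'$.

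The first direction is restriction. Any $\psi \in A^2_{(n,0)}(M)$ restricts to a holomorphic $(n,0)$-form on $M'$. Because $M - M'$ is an analytic subvariety of complex dimension $\leq n-1$, it has real Hausdorff codimension $\geq 2$, hence measure zero with respect to the volume form associated to $i^{n^2}\psi \wedge \oo{\psi}$. Therefore $\|\psi|_{M'}\|_{L^2(M')} = \|\psi\|_{L^2(M)}$, and restriction gives an isometric embedding $A^2_{(n,0)}(M) \hookrightarrow A^2_{(n,0)}(M')$.

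The heart of the argument is the opposite direction: every $\varphi \in A^2_{(n,0)}(M')$ extends to a holomorphic $(n,0)$-form $\tilde\varphi$ on $M$. The extension is worked out locally near points of $M - M'$. By a stratification argument (or induction on the dimension of the singular locus), it suffices to treat the case where $M - M'$ is locally the smooth hypersurface $\{z_1 = 0\}$ in a coordinate chart $U \subseteq \bC^n$, after which extension across the lower-dimensional singular strata of $M - M'$ follows from Riemann's second removable singularity theorem (which applies since those strata have complex codimension $\geq 2$ in $M$). In the hypersurface case, write $\varphi = f(z)\, dz_1 \w \cdots \w dz_n$ on $U \cap M'$ and observe that a Laurent expansion in $z_1$ around the hypersurface has the form $f(z) = \sum_{k \in \bZ} a_k(z_2,\ldots,z_n) z_1^k$; the $L^2$ integrability of $\varphi$ on $U \cap M'$, combined with the divergence of $\int_{|z_1| < r} |z_1|^{-2k}\, dV(z_1)$ for $k \geq 1$, forces all negative-index coefficients to vanish. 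Hence $f$ is locally bounded near the hypersurface and extends holomorphically there. The extension $\tilde\varphi$ is unique by analytic continuation, and since $M - M'$ has measure zero, $\|\tilde\varphi\|_{L^2(M)} = \|\varphi\|_{L^2(M')}$, so the extension map is an isometric embedding $A^2_{(n,0)}(M') \hookrightarrow A^2_{(n,0)}(M)$.

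The two isometric embeddings are mutual inverses, giving a unitary isomorphism $A^2_{(n,0)}(M) \cong A^2_{(n,0)}(M')$. Choose any orthonormal basis $\{\varphi_k\}$ of $A^2_{(n,0)}(M')$; then $\{\tilde\varphi_k\}$ is an orthonormal basis of $A^2_{(n,0)}(M)$. Since $\tilde\varphi_k|_{M'} = \varphi_k$, the defining series for the Bergman kernel yield
\begin{equation*}
K_M(x,\bar y) = i^{n^2}\sum_k \tilde\varphi_k(x) \w \oo{\tilde\varphi_k(y)} = i^{n^2}\sum_k \varphi_k(x) \w \oo{\varphi_k(y)} = K_{M'}(x,\bar y)
\end{equation*}
for all $x,y \in M'$, as desired.

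The main obstacle is the $L^2$ extension step across the analytic subvariety $M - M'$; the elementary Laurent-coefficient argument outlined above handles the smooth hypersurface case cleanly, but one must be careful to invoke the right version of Riemann's removable singularity theorem to descend through the lower-dimensional singular strata of $M - M'$.
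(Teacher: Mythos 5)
The paper only cites this result from Kobayashi \cite{Ko} and does not reproduce a proof, so there is no in-paper argument to compare against. Your proof is correct and is the standard one: the complement $M-M'$ has measure zero, so restriction is an isometry $A^2_{(n,0)}(M)\hookrightarrow A^2_{(n,0)}(M')$; the $L^2$ Riemann removable singularity theorem (Laurent-coefficient estimate across the open set of smooth codimension-one points, then the Hartogs/Riemann~II theorem across the remaining analytic set of codimension $\geq 2$) shows restriction is onto, giving a unitary isomorphism of the Bergman spaces and hence equality of the kernel forms on $M'\times M'$. The only nitpick is the phrase ``it suffices to treat the case where $M-M'$ is locally a smooth hypersurface'': as you in fact go on to say, one first extends across the smooth $(n-1)$-dimensional locus via the Laurent argument, and \emph{then} the leftover lower-dimensional analytic set is removed by Riemann~II, so it is really a two-step reduction rather than a single reduction to the hypersurface case.
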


This theorem suggests the following generalization of the Bergman kernel form to complex analytic spaces.
\begin{defn}\label{defnsp}
Let $M$ be a reduced complex analytic space, and let $V\subseteq M$ denote its set of singular points. The Bergman kernel form of $M$ is defined as
	\begin{equation*}
		K_M(x,\bar{y})=K_{M-V}(x,\bar{y})\quad \mbox{ for any } x, y\in M-V,
	\end{equation*}
	where $K_{M-V}$ denotes the Bergman kernel form of the complex manifold consisting of regular points of $M$.
\end{defn}

Let $N_1, N_2$ be two complex manifolds of dimension $n$. Let $\gamma: N_1\rightarrow M$ and $\tau: N_2\rightarrow M$ be holomorphic maps. The pullback of the Bergman kernel $K_M(x,\bar{y})$ of $M$ to $N_1 \times N_2$ is defined in the standard way. That is, for any $z\in N_1, w\in N_2$,
\begin{equation*}
	\bigl( (\gamma,\tau)^*K_M\bigr)(z,\bar{w})=\sum_{k=1}^{q} \gamma^*\varphi_k(z)\wedge \oo{\tau^*\varphi_k(w)}.
\end{equation*}
In terms of local coordinates, we may write the Bergman kernel form of $M$ as
\begin{equation}\label{BK local coordinates 1}
	K_M(x,\bar{y})=\widetilde{K}_M(x,\bar{y})dx_1\wedge\cdots dx_n\wedge d\oo{y_1}\wedge\cdots \wedge d\oo{y_n},
\end{equation}
where the function $\widetilde{K}_M(x,\bar{y})$ depends on the choice of local coordinates.
We then have
\begin{equation}\label{BK local coordinates 2}
	\bigl( (\gamma,\tau)^*K_M\bigr)(z,\bar{w})=\widetilde{K}_M(\gamma(z),\oo{\tau(w)})\,J_{\gamma}(z)\,\oo{J_{\tau}(w)}\,dz_1\wedge\cdots dz_n\wedge d\oo{w_1}\wedge\cdots \wedge d\oo{w_n},
\end{equation}
where $J_{\gamma}$ and $J_{\tau}$ are the Jacobian determinants of the maps $\gamma$ and $\tau$, respectively. In particular, we observe that the kernel function $\widetilde{K}_M(x,\bar{y})$ transforms accordining to the usual biholomorphic invariance formula under changes of local coordinates.

Let $M$ be as in Definition \ref{defnsp}. Assume $K_M(x,\bar{x})$ is non-vanishing (on the set of regular points of $M$, where it is defined). We define a Hermitian $(1,1)$-form on the regular part of $M$ by
\begin{equation}\label{Bergman metric def}
	\omega_M:=i \, \partial\dbar \log \widetilde{K}_M(x,\bar{x}).
\end{equation}
The biholomorphic invariance of the Bergman kernel implies that this form is independent of the choice of local coordinates used to determine the function $\widetilde{K}_M(x,\bar{x})$.
The Bergman metric on $M$ is the metric induced by $\omega_M$ (when it indeed induces a positive definite metric on the regular part of $M$).

We recall the Bergman kernel transformation formula in \cite{EbenfeltXiaoXu2020algebraicity} for (possibly branched) covering maps of complex analytic spaces. This formula generalizes a classical theorem of Bell (\cite{Bell81},  \cite{Bell82}; see also \cite{CF}):

\begin{thm}\label{BK transformation thm}
	Let $M_1$  and $M_2$ be two complex analytic sets.
	Let $V_1\subseteq M_1$ and $V_2 \subseteq M_2$ be proper analytic subvarieties such that $M_1-V_1, M_2-V_2$ are complex manifolds of the same dimension.
	Assume that $f:M_1-V_1\rightarrow M_2-V_2$ is a finite ($m-$sheeted) holomorphic covering map. Let $\Gamma$ be the deck transformation group for the covering map (with $|\Gamma|=m$), and denote by $K_i$ the Bergman kernels of $M_i$ for $i=1,2$. Then the Bergman kernel forms transform according to
	\begin{equation*}
	\sum_{\gamma\in \Gamma}(\id, \gamma)^*K_1=(f,f)^*K_2 \quad~\text{on}~(M_1-V_1) \times (M_1-V_1),
	\end{equation*}
	where $\id: M_1\rightarrow M_1$ is the identity map.
\end{thm}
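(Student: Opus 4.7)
The plan is to realize both sides of the asserted identity as $m$ times the reproducing kernel of the $\Gamma$-invariant subspace
\begin{equation*}
A^2_\Gamma \;:=\; \{\phi \in A^2_{(n,0)}(M_1-V_1) : \gamma^*\phi = \phi \text{ for all } \gamma \in \Gamma\} \;\subseteq\; A^2_{(n,0)}(M_1-V_1).
\end{equation*}
First, Definition \ref{defnsp} together with Theorem \ref{Kobayashi thm} lets me treat $K_1$ and $K_2$ as the Bergman kernel forms of the complex manifolds $M_1-V_1$ and $M_2-V_2$ (after, if necessary, enlarging each $V_i$ to include the singular set of $M_i$). The key object is the pullback $f^*\colon A^2_{(n,0)}(M_2-V_2)\to A^2_{(n,0)}(M_1-V_1)$: because $f$ is an unramified $m$-sheeted covering, the change-of-variables formula applied to the inner product \eqref{inner product} yields $\|f^*\psi\|^2 = m\|\psi\|^2$, so $(1/\sqrt m)f^*$ is an isometric embedding whose image lies in $A^2_\Gamma$.

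Next I would verify that $(1/\sqrt m)f^*$ maps \emph{onto} $A^2_\Gamma$: any $\Gamma$-invariant holomorphic $(n,0)$-form on $M_1-V_1$ descends through the covering to a holomorphic $(n,0)$-form on $M_2-V_2$, and the same scaling identity makes the descended form square-integrable. Fixing an ONB $\{\psi_j\}$ of $A^2_{(n,0)}(M_2-V_2)$ and an ONB $\{e_\ell\}$ of $A^2_\Gamma$, the family $\{(1/\sqrt m)f^*\psi_j\}$ is then also an ONB of $A^2_\Gamma$. Setting $K_\Gamma := i^{n^2}\sum_\ell e_\ell \wedge \overline{e_\ell}$, this yields immediately
\begin{equation*}
(f,f)^*K_2 \;=\; i^{n^2}\sum_j f^*\psi_j \wedge \overline{f^*\psi_j} \;=\; m K_\Gamma.
\end{equation*}
For the left-hand side, I would expand $K_1$ in an ONB $\{\varphi_k\}$ of $A^2_{(n,0)}(M_1-V_1)$ and note that $P := (1/m)\sum_\gamma \gamma^*$ is the orthogonal projection onto $A^2_\Gamma$. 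Expanding $P\varphi_k = \sum_\ell \langle \varphi_k, e_\ell\rangle e_\ell$, exchanging the order of summation, and using $\sum_k \langle e_\ell, \varphi_k\rangle \varphi_k = e_\ell$ yields
\begin{equation*}
\sum_{\gamma\in\Gamma}(\id,\gamma)^*K_1 \;=\; i^{n^2}\sum_k \varphi_k \wedge \overline{\sum_{\gamma}\gamma^*\varphi_k} \;=\; m K_\Gamma,
\end{equation*}
and equating the two displays proves the theorem.

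The point requiring most care is correctly setting up the analytic machinery across singularities: one needs Theorem \ref{Kobayashi thm} to identify each $K_i$ with the Bergman kernel of its regular locus, the change-of-variables formula for the unramified covering $f$ to establish the $L^2$ scaling and the isometric property of $(1/\sqrt m)f^*$, and a clean descent argument showing that every element of $A^2_\Gamma$ genuinely arises as a pullback. With these preliminaries in hand, the identification of the two sides is a standard reproducing-kernel calculation modeled on the classical Bell transformation formula for proper holomorphic maps.
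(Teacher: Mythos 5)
The paper does not prove Theorem \ref{BK transformation thm}; it recalls the result from \cite{EbenfeltXiaoXu2020algebraicity}, where it is established as a generalization of Bell's classical transformation formula. Evaluated on its own merits, your argument is correct and implements the standard reproducing-kernel strategy: realize both sides of the identity as $mK_\Gamma$, where $K_\Gamma$ is the reproducing kernel of the $\Gamma$-invariant subspace $A^2_\Gamma\subseteq A^2_{(n,0)}(M_1-V_1)$. The structural facts you invoke are all in place: the reduction via Definition \ref{defnsp} and Theorem \ref{Kobayashi thm} to an honest unramified covering of manifolds; the fact that $m^{-1/2}f^*$ is a unitary isomorphism from $A^2_{(n,0)}(M_2-V_2)$ onto $A^2_\Gamma$ (surjectivity requires the covering to be Galois, which is exactly what the hypothesis $|\Gamma|=m$ encodes and which you use implicitly in the descent step); and the identification of the group average $P = m^{-1}\sum_{\gamma\in\Gamma}\gamma^*$ as the orthogonal projection onto $A^2_\Gamma$. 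The one step you leave implicit, the interchange of $\sum_k$ and $\sum_\ell$ in the final display, is justified by the locally uniform convergence of Bergman kernel expansions and is routine.
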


\subsection{Finite ball quotients}
In this subsection, we recall the canonical realization of a finite ball quotient due to H. Cartan \cite{Cartan}.
Let $\mathbb{B}^n$ denote the unit ball in $\mathbb{C}^n$ and $\text{Aut}(\mathbb{B}^n)$ its (biholomorphic) automorphism group.
Let $\Gamma$ be a finite subgroup of $\text{Aut}(\mathbb{B}^n)$. Assume $\Gamma$ is fixed point free; that is, assume no $\gamma\in \Gamma-\{\id\}$ has any fixed points on $\partial\mathbb{B}^n$.
As the unitary group $U(n)$ is a maximal compact subgroup of $\Aut(\mathbb{B}^n)$, by basic Lie group theory, there exists some $\psi\in \Aut(\mathbb{B}^n)$ such that $\Gamma\subseteq \psi^{-1}\cdot U(n)\cdot \psi$. Thus without loss of generality, we can assume $\Gamma\subseteq U(n)$, i.e., $\Gamma$ is a finite unitary subgroup. The origin $0\in \mathbb{C}^n$ is then always a fixed point of every element in $\Gamma$. Moreover, the fixed point free condition on $\Gamma$ is equivalent to the assertion that every $\gamma\in \Gamma-\{\id\}$ has no other fixed point than $0$. We also note that, by the fixed point free condition, the action of $\Gamma$ on $\partial\mathbb{B}^n$ is properly discontinuous and $\partial\mathbb{B}^n/\Gamma$ is a smooth manifold.

By a theorem of H. Cartan \cite{Cartan}, the quotient $\mathbb{C}^n/\Gamma$ can be realized as a normal algebraic subvariety $V$ in some $\mathbb{C}^N$. To be more precise, we write $\mathcal{A}$ for the algebra of $\Gamma$-invariant holomorphic polynomials, that is,
\begin{equation*}
\mathcal{A}:=\big\{p\in \mathbb{C}[z_1,\cdots, z_n]: p\circ\gamma=p \,\mbox{ for all }\gamma \in\Gamma \big\}.
\end{equation*}
By Hilbert's basis theorem, $\mathcal{A}$ is finitely generated. Moreover, we can find a minimal set of homogeneous polynomials $\{p_1, \cdots, p_N\}\subseteq \mathcal A$ such that every $p\in \mathcal{A}$ can be expressed in the form
\begin{equation*}
p(z)=q(p_1(z), \cdots, p_N(z)),
\end{equation*}
where $q$ is some holomorphic polynomial in $\mathbb{C}^N$.
The map $Q:=(p_1,\cdots,p_N): \mathbb{C}^n\rightarrow \mathbb{C}^N$ is proper and induces a homeomorphism of $\mathbb{C}^n/\Gamma$ onto $V:=Q(\mathbb{C}^n)$. As $Q$ is a proper holomorphic polynomial map, $V$ is an algebraic variety. The restriction of $Q$ to the unit ball $\mathbb{B}^n$ maps $\mathbb{B}^n$ properly onto a relatively compact domain $\Omega\subseteq V$. In this way, $\mathbb{B}^n/\Gamma$ is realized as $\Omega$ by $Q$.
Following \cite{Rudin}, we call such $Q$ the \emph{basic map} associated to $\Gamma$.
The ball quotient $\Omega=\mathbb{B}^n/\Gamma$ is nonsingular if and only if the group $\Gamma$ is generated by \emph{reflections}, i.e., elements of finite order in $U(n)$ that fix a complex subspace of dimension $n-1$ in $\mathbb{C}^n$ (see \cite{Rudin}); thus, if $\Gamma$ is fixed point free and nontrivial, then $\Omega=\mathbb{B}^n/\Gamma$ must have singularities.
Moreover, $\Omega$ has smooth boundary if and only if $\Gamma$ is fixed point free (see \cite{Fo86} for more results along this line).

\section{Proof of Theorem \ref{Thm 1} and \ref{Thm 2}}\label{Sec Thm 1 and 2}
In this section, we prove that Theorem \ref{Thm 1} and \ref{Thm 2} follow from Theorem \ref{main thm}; see Section \ref{subsection proof of Thm 1} and \ref{subsection proof of Thm 2}, respectively.

\subsection{Proof of Theorem \ref{Thm 2}}\label{subsection proof of Thm 2}

The implication (i) $\implies$ (ii) in Theorem \ref{Thm 2} is trivial. We therefore only need to prove the converse. Let $G$ be as in Theorem \ref{Thm 2} and assume the conditions in (ii) hold. To prove (i), assuming that Theorem \ref{main thm} has been proved, we proceed in three steps.

\textbf{Step 1.} It follows from the assumption that the boundary $\partial G$ is strongly pseudoconvex. We first prove that the boundary $\partial G$ is indeed spherical.
Recall that a CR hypersurface $M$ of dimension $2n-1$ is said to be spherical if it is locally CR diffeomorphic, near every point, to an open piece of the unit sphere $S^{2n-1}\subseteq \bC^n$. To prove that $\partial G$ is spherical near a given boundary point $q\in \partial G$, one first uses the K\"ahler-Einstein assumption and the localization of the Bergman kernel (see \cite{Fe}, \cite{BoSj} and also see Proposition 3.1 in \cite{HuLi} for a detailed and nice proof) near $q$ to study the coefficients in Fefferman's expansion of the Bergman kernel of a smaller domain in $V$, which shares an open piece of its boundary with $G$ and is biholomorphic to a smoothly bounded strongly pseudoconvex domain in $\bC^n$.  In the two-dimensional case ($n=2$), one applies the argument in \cite{FuWo} (see Section 2 in \cite{FuWo}) to prove that the coefficient of the logarithmic term in Fefferman's expansion of the Bergman kernel vanishes to infinite order at in an open neighborhood of $q$. Using the (local) resolution of the Ramadanov Conjecture in $\bC^2$, as in \cite{FuWo}, one deduces that $\partial G$ is spherical. In the higher dimensional case ($n\geq 3$), one uses the argument in \cite{HX} (see the proof of Theorem 1.1 in \cite{HX}) to study the coefficient of the principle term (strong singularity) in Fefferman's expansion of the Bergman kernel and prove that every boundary point of $G$ is CR-umbilical, which implies that $\partial G$ is spherical. The detailed proof for step 1 is contained in \cite{HuLi} (see Theorem 1.1 in \cite{HuLi}). We will omit the proof here.

\textbf{Step 2.} In this step, we prove that $G$ is biholomorphic to a ball quotient $\mathbb{B}^n/\Gamma$ for some finite fixed point free subgroup $\Gamma \subseteq U(n)$.
Since we know $\partial G$ is spherical from Step 1 and $\partial G$ is contained in a real algebraic hypersurface in $\bC^N$, it follows from Corollary 3.3 in \cite{Hu} that $\partial G$ is CR equivalent to CR spherical space form $S^{2n-1}/\Gamma$ where $\Gamma \subseteq U(n)$ is as above. More precisely, there is an algebraic CR map $F: S^{2n-1}\rightarrow \partial G$, which is a finite covering map. From this one can further prove that $G$ is biholomorphic to $\mathbb{B}^n/\Gamma$. The proof of this is identical with Step 3 in Section 5 of \cite{EbenfeltXiaoXu2020algebraicity}. The general setting of \cite{EbenfeltXiaoXu2020algebraicity} is in dimension $n=2$, but, as pointed out in Remark 5.4 in \cite{EbenfeltXiaoXu2020algebraicity}, this argument works for all dimensions. The argument shows that $F$ extends to a proper, holomorphic branched covering map from $\mathbb{B}^n$ onto $G$, which realizes $G$ as the ball quotient $\mathbb{B}^n/\Gamma$. In particular, $G$ is biholomorphic to $\mathbb{B}^n/\Gamma$ as claimed. Since $\Gamma\subseteq U(n)$ is fixed point free, either $G$ has one unique singular point at $F(0)$ when $\Gamma\neq \{\id\}$ or $G$ is smooth when $\Gamma=\{\id\}$. In the former case, $F: \mathbb{B}^n-\{0\}\rightarrow G-\{F(0)\}$ is a smooth covering map whose group of deck transformations is $\Gamma$, and in the latter case, $F$ extends as a biholomorphism $\bB^n\to G$.

\textbf{Step 3.} By the conclusion in Step 2, the fundamental group of the regular part of $G$ is isomorphic to $\Gamma$. By assumption in (ii), $\Gamma$ is abelian. Moreover, the biholomorphism between $G$ and $\mathbb{B}^n/\Gamma$ gives an isometry between the Bergman metrics of $G$ and $\mathbb{B}^n/\Gamma$. By assumption in (ii) again, the Bergman metric of $\mathbb{B}^n/\Gamma$ is K\"ahler-Einstein. Thus, by Theorem \ref{main thm}, $\Gamma$ must then be the trivial group $\{\id\}$. Hence $G$ is biholomorphic to $\mathbb{B}^n$.
\qed

\subsection{Proof of Theorem \ref{Thm 1}}\label{subsection proof of Thm 1}
We now prove Theorem \ref{Thm 1}, under the assumption that Theorem \ref{main thm} has been proved. The "if" implication is trivial, and we only need to prove the converse. Thus, we assume that $G$ is as in Theorem \ref{Thm 1} and the Bergman metric of $G$ is K\"ahler-Einstein, and we shall prove $G$ is biholomorphic to $\mathbb{B}^n$. By copying the argument in Step 1 and Step 2 in Section \ref{subsection proof of Thm 2}, we conclude that there is an algebraic CR map $F$ from $S^{2n-1}$ to $\partial G\subseteq \partial\mathbb{B}^N=S^{2N-1}$, which is a finite covering map. In particular, the map $F$ induces a smooth, nonconstant CR map from the spherical space form $S^{2n-1}/\Gamma$, for some finite fixed point free subgroup  $\Gamma\subseteq \Aut(\mathbb{B}^n)$, to $S^{2N-1}$ (see \cite{Lich92}, \cite{DALi} and \cite{DA96}). Since $\Gamma$ is a finite subgroup of $\Aut(\mathbb{B}^n)$, by basic Lie group theory as above, $\Gamma$ is contained in a conjugate of the unitary group $U(n)$. By Theorem 8 in \cite{DALi}, 
$\Gamma$ is conjugate to one in a short list of special cyclic subgroups of $U(n)$. In particular, the finite subgroup $\Gamma$, as well as the fundamental group of the regular part of $G$ then, is abelian. Now, Theorem \ref{Thm 1} follows from Theorem \ref{Thm 2}.
\qed

\section{Proof of Theorem \ref{main thm}}\label{Sec main thm proof}
In this section, we shall prove Theorem \ref{main thm}. It suffices to prove the Bergman metric of $\bB^n /\Gamma$ cannot be K\"ahler-Einstein if $\Gamma \subseteq \mathrm{Aut}(\bB^n)$ is nontrivial, abelian, and fixed point free. We will prove this by contradiction. Thus, we suppose $\Gamma \subseteq \mathrm{Aut}(\bB^n)$ is abelian and fixed point free, $\Gamma \neq \{ \id \},$ and the Bergman kernel of $\Omega=\bB^n /\Gamma$ is K\"ahler-Einstein. As before, we know $\Gamma$ is contained in a conjugate of $U(n).$ Thus, without loss of generality, we will assume $\Gamma \subseteq U(n).$

We shall split our proof into three subsections.  Section 4.1 reduces the K\"ahler-Einstein condition of the Bergman metric to a functional equation (see equation \eqref{KE equation MA general}) for general finite, fixed point free groups $\Gamma\subset\Aut(\bB^n)$.  
In Section 4.2, we focus on the case where the group $\Gamma$ is additionally assumed to be abelian, and simplify the equation further into a rather explicit one (see equation (\ref{KE equation cyclic})). After that, in Section 4.3, we take the Taylor expansion of both sides of the equation. By carefully comparing the lowest order Taylor terms, we conclude that they can never match up due to some combinatorial inequalities. 
The proofs of these inequalities are then given in Section \ref{Sec combinatorial lemma}, which concludes the proof of Theorem \ref{main thm}.

\subsection{The K\"ahler-Einstein equation on finite ball quotients}
Since any two realizations of $\bB^n/\Gamma$
are biholomorphic, we can use H. Cartan's canonical realization of $\bB^n/\Gamma$, which was discussed Section 2.2. Thus, let $Q: \bC^n\rightarrow \bC^N$ be the basic map realizing $\bB/\Gamma$ as a domain $\Omega:=Q(\bB^n)$ in the $n$-dimensional algebraic variety $Q(\bC^n)$, as explained in Section 2.2. Set
\begin{equation*}
Z:=\{z\in \mathbb{C}^n: \mbox{the Jacobian of $Q$ at $z$ is not of full rank}  \}.
\end{equation*}
Note that in fact $Z=\{0\}$ by the fixed point free condition and nontriviality of $\Gamma$ (see \cite{Cartan} and \cite{EbenfeltXiaoXu2020algebraicity}). We denote by $K_{\Omega}$ and $K_{\bB^n}$ the Bergman kernel forms of $\Omega$ and $\bB^n$ respectively. By the transformation formula in Theorem \ref{BK transformation thm}, they are related by
\begin{equation}\label{BK transformation}
\sum_{\gamma\in \Gamma}(\id, \gamma)^*K_{\bB^n}=(Q,Q)^*K_{\Omega} \quad ~\text{on}~(\bB^n-Z) \times (\bB^n-Z),
\end{equation}
where $\id: \bB^n\rightarrow \bB^n$ is the identity map.
We note that
$$Q^*(i\partial \overline{\partial} \log \widetilde{K}_{\Omega})=i \partial \overline{\partial} \log ((Q,Q)^* \widetilde{K}_{\Omega}).$$
Furthermore, we also note that the K\"ahler-Einstein condition is a local property and that $Q$ is a local biholomorphism (on $\bB^n-Z$). It follows that the Bergman metric of $\Omega$ is K\"ahler-Einstein  if and only if the logarithm of the left hand side of (\ref{BK transformation}), restricted to the diagonal $w=z$, gives the potential function of a K\"ahler-Einstein metric on $\bB^n -Z.$

Recall the notation $\langle u, v\rangle=\sum_{i=1}^n u_i v_i$ for two column vectors $u=(u_1, \cdots, u_n)^{\intercal}, v=(v_1, \cdots, v_n)^{\intercal}$. Set $d_{\gamma}:=\det \gamma$ for $\gamma \in U(n)$. The left hand side of \eqref{BK transformation}, in the standard coordinates $z,w$ of $\bC^n$, equals
\begin{equation*}
	\sum_{\gamma\in \Gamma}(\id, \gamma)^*K_{\bB^n}=\frac{n!}{\pi^n}\sum_{\gamma\in \Gamma} \frac{\oo{d_{\gamma}}}{(1-\langle z,\oo{\gamma w}\rangle)^{n+1}}dz_1\wedge\ldots dz_n\wedge d\oo w_1\wedge\ldots\wedge d\oo w_n,
\end{equation*}
where $z, w \in \bB^n$ are regarded as column vectors and the elements of $\Gamma$ as unitary matrices. We introduce the function
\begin{equation*}
	\varphi(z,\oo{w}):=\sum_{\gamma\in \Gamma} \frac{\oo{d_{\gamma}}}{(1-\langle z,\oo{\gamma w}\rangle)^{n+1}},
\end{equation*}
and note that $\varphi(z, \oo{z})$ is real analytic on $\mathbb{B}^n.$ By the preceding discussion, we conclude that the Bergman metric of $\Omega$ is K\"ahler-Einstein if and only if $\varphi=\varphi(z,\bar z)$ is the potential function of a K\"ahler-Einstein metric, i.e., for $z\in \bB^n - Z$ and some constant $c_1\in \bR$,
\begin{equation}\label{KE equation general}
	-\partial \oo{\partial}\log G(z, \oo{z}) =-c_1\,\partial \oo{\partial} \log\varphi(z, \oo{z}),
\end{equation}
where $G=\det (g_{i\oo{j}})$ with $g_{i \oo{j}}=\partial_{z_i}\partial_{\oo{z_j}}\log\varphi$.
(We remark that one can use the result of Klembeck \cite{Kl}
to find the value of $c_1$, but this value will also come out directly from our arguments below.) The equation (\ref{KE equation general}) is equivalent to the statement that $\log G- c_1\log \varphi$ is pluriharmonic on $\bB^n-Z$. Consequently, since $Z=\{0\}$ and $n\geq 2$ so that $\bB^n -Z$ is simply connected, there exists some holomorphic function $h$ on $\bB^n-Z$ such that
\begin{equation*}
	\log G(z, \oo{z})- c_1\log \varphi(z, \oo{z})=h(z)+\oo{h(z)}.
\end{equation*}

By Hartogs's extension theorem, again since $n\geq 2$, we may assume $h$ is holomorphic on $\mathbb{B}^n$.
\begin{lemma}\label{lemmah} The function
	$h$ is constant. Furthermore, $h+\oo{h}=n\ln(n+1)$ and $c_1=1$.
\end{lemma}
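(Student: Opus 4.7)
The plan is to pin down both $c_1$ and $h$ by analyzing the boundary behavior of $\varphi$ and $G$ near $\partial\bB^n$. The key observation is that the fixed point free hypothesis on $\Gamma$ makes $\gamma=\id$ the only term in the sum defining $\varphi$ that becomes singular along $\partial\bB^n$, so $\varphi$ behaves, up to uniformly bounded corrections, like the Bergman kernel function of $\bB^n$. The resulting asymptotic expansions will force $c_1=1$, and the maximum principle will then force $h$ to be constant with the prescribed value.

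I would first verify that for each $\gamma\in\Gamma\setminus\{\id\}$ the quantity $1-\langle z,\overline{\gamma z}\rangle$ is bounded away from zero on $\overline{\bB^n}$: equality in the Cauchy--Schwarz inequality $|\langle z,\overline{\gamma z}\rangle|\le|z|^2$ would require $z$ to be a $1$-eigenvector of the unitary matrix $\gamma$, which the fixed point free condition forbids. Hence the $\gamma\neq\id$ contributions to $\varphi$ are uniformly bounded on $\overline{\bB^n}$, and as $z\to\partial\bB^n$,
\begin{equation*}
\log\varphi(z,\bar z)=-(n+1)\log(1-|z|^2)+O\bigl((1-|z|^2)^{n+1}\bigr).
\end{equation*}
Differentiating twice, which for $n\ge 2$ leaves an $o(1)$ error, yields $g_{i\bar j}=g^{\bB^n}_{i\bar j}+o(1)$ near $\partial\bB^n$, and taking determinants gives
\begin{equation*}
\log G(z,\bar z)=n\log(n+1)-(n+1)\log(1-|z|^2)+o(1)\quad\text{as } z\to\partial\bB^n.
\end{equation*}

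Next I would fix $c_1$ by applying $\partial\bar\partial$ to the identity $\log G-c_1\log\varphi=h+\bar h$, which reduces the K\"ahler--Einstein condition to $\Ric(g)=-c_1 g$. The expansions above show that $g$ agrees with the ball's Bergman metric $g^{\bB^n}$ to leading order near $\partial\bB^n$, and $g^{\bB^n}$ itself satisfies $\Ric(g^{\bB^n})=-g^{\bB^n}$. Since $g^{\bB^n}_{i\bar j}$ blows up like $(1-|z|^2)^{-2}$ along the normal direction while the perturbations are $o(1)$, comparing leading singular parts of $\Ric(g)$ and $c_1 g$ as $z\to\partial\bB^n$ forces $c_1=1$. (This also follows from Klembeck's theorem applied to the strongly pseudoconvex domain $\Omega=\bB^n/\Gamma$, as referenced in the paper.)

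With $c_1=1$ the two expansions combine to give $\log G-\log\varphi\to n\log(n+1)$ uniformly as $z\to\partial\bB^n$. Since $h$ is holomorphic on all of $\bB^n$ by the Hartogs extension already invoked, $h+\bar h$ is pluriharmonic on $\bB^n$ and extends continuously to $\overline{\bB^n}$ with the constant boundary value $n\log(n+1)$. The maximum principle then forces $h+\bar h\equiv n\log(n+1)$ on $\bB^n$, and a holomorphic function with constant real part is constant, so $h$ itself is a constant with $h+\bar h=n\log(n+1)$. The main technical obstacle will be the determinantal expansion of $G$: one writes $G=\det(g^{\bB^n}+E)=\det(g^{\bB^n})\cdot\det\bigl(I+(g^{\bB^n})^{-1}E\bigr)$, and uses that the spectral radius of $(g^{\bB^n})^{-1}$ is $O(1-|z|^2)$ near the boundary, so that an $o(1)$ additive perturbation of $g$ only produces a $1+o(1)$ multiplicative perturbation of $G$.
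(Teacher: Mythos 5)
Your proposal is correct and follows the same basic strategy as the paper: use the fixed-point-free hypothesis to isolate the $\gamma=\id$ term as the only source of boundary singularity in $\varphi$, deduce that $\varphi$ and $G$ inherit the boundary asymptotics of the unit ball's Bergman kernel and volume form, and close with the maximum principle. The one genuine point of departure is how $c_1=1$ is extracted. You compare the leading singular parts of $\Ric(g)$ and $c_1 g$ (equivalently, invoke Klembeck, which the paper explicitly notes as an alternative). The paper instead makes $c_1$ fall out of the maximum-principle step itself: writing $g=e^{2h}$, a nowhere-vanishing holomorphic function on $\bB^n$, the boundary asymptotics show $|g|=G/\varphi^{c_1}\to a$ with $a\in[0,\infty]$ depending on $c_1$; the maximum principle applied to $g$ and to $1/g$ rules out $a=0$ and $a=\infty$, which forces $c_1=1$, whence $a=(n+1)^n$ and $|g|\equiv(n+1)^n$. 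This single argument simultaneously pins down $c_1$ and the constancy of $h$ without any curvature-tensor comparison, whereas your version first identifies $c_1$ and then applies the maximum principle to the pluriharmonic function $h+\bar h$ with constant boundary data. Both are sound. A small remark on your error estimate: the remainder obtained by applying $\partial\bar\partial$ to the $O\bigl((1-|z|^2)^{n+1}\bigr)$ correction in $\log\varphi$ is of size $O\bigl((1-|z|^2)^{n-1}\bigr)$, which is $o(1)$ precisely because $n\ge 2$; making this explicit is worthwhile since the argument (and indeed the whole theorem) fails when $n=1$.
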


\begin{proof}
This lemma is in fact proved in \cite{HuLi} using ideas from \cite{FuWo}. For the reader's convenience, we also sketch a proof here. We give a slightly different proof in order to avoid some tedious computations.

Set $g=e^{2h}$. Then $g$ is holomorphic in $\mathbb{B}^n$ and $|g|=e^{h+\bar{h}}>0$. We first study the boundary behavior of $g$.
	
	\textbf{Claim.} $\lim_{|z|\rightarrow 1}|g|=a$ for some constant $0\leq a\leq \infty$.
	\begin{proof}[Proof of the claim]
		Note that
		\begin{equation}\label{eq 1 for c}
		|g|=e^{h+\bar{h}}=\frac{G}{\varphi^{c_1}}.
		\end{equation}
		We also note that
		\begin{align}\label{eq 2 for c}
		\begin{split}
		\frac{n!}{\pi^n}\varphi(z,\bar{z})=&\frac{n!}{\pi^n}\Bigl(\frac{1}{(1-|z|^2)^{n+1}}+\sum_{\gamma\in \Gamma, \gamma\neq \id} \frac{\oo{d_{\gamma}}}{(1-\langle z,\oo{\gamma z} \rangle)^{n+1}}\Bigr)\\
		:=& \frac{n!}{\pi^n}\frac{1}{(1-|z|^2)^{n+1}}+T(z,\bar{z}),
		\end{split}			
		\end{align}
where $T(z,\bar{z})$ is real analytic in a neighborhood of $\oo{\mathbb{B}^n}$ since $\Gamma$ is assumed to be fixed point free. In particular, the asymptotic singular part of $\frac{n!}{\pi^n}\varphi$ as $z \rightarrow \partial \bB^n$ is the same as that of the Bergman kernel of $\mathbb{B}^n$. Let $J$ be the Monge-Amp\`ere type operator as defined in \eqref{MA operator}. With the preceding observation and the well known formula
		\begin{equation*}
		G=\det\Bigl(\partial_{z_i}\partial_{\oo{z_j}}\log\bigl(\varphi\bigr) \Bigr)=\frac{J(\varphi)}{\varphi^{n+1}},
		\end{equation*}
a simple calculation yields that the most singular part of $G$ (as $z \rightarrow \partial \bB^n$) is identical with that of the volume form of the Bergman metric on $\mathbb{B}^n$. More precisely,
		\begin{equation}\label{eq 3 for c}
		G=\frac{(n+1)^n}{(1-|z|^2)^{n+1}}+\widehat{G},
		\end{equation}
where $\widehat{G}$ is real analytic in $\mathbb{B}^n-Z$ and satisfies $(1-|z|^2)^{n+1}\widehat{G}\rightarrow 0$ as $|z|\rightarrow 1$. Then by \eqref{eq 1 for c}, \eqref{eq 2 for c} and \eqref{eq 3 for c}, we see
		\begin{equation}\label{eq 4 for c}
		\lim_{|z|\rightarrow 1} |g|=(n+1)^n\lim_{|z|\rightarrow 1} \frac{(1-|z|^2)^{(n+1){c_1}}}{(1-|z|^2)^{n+1}}.
		\end{equation}
Thus, depending on $c_1$, we have $\lim_{|z|\rightarrow 1} |g|=a$ for some $0\leq a\leq \infty$.
This proves the claim.
\end{proof}
But $g$ is a nowhere vanishing holomorphic function in $\mathbb{B}^n$. A standard maximum principle argument applied to $g$ and $\frac{1}{g},$ respectively, yields $a\neq 0$ and $a\neq \infty$, respectively. Hence $0<a<\infty$. But by \eqref{eq 4 for c}, this happens if and only if $c_1=1$. And in this case by \eqref{eq 4 for c}, $a=(n+1)^n$. Applying the maximum principle again, we see $|g|\equiv a=(n+1)^n.$
This implies $g$ and thus $h$ are constant functions, and $h+ \oo{h} \equiv n\ln (n+1).$ The proof of the lemma is finished.
\end{proof}

We define the Monge-Amp\`ere type operator $J$ as follows (note that it differs by a sign from the standard operator introduced by Fefferman \cite{Fe2}): 
\begin{equation}\label{MA operator}
	J(\varphi):=\det
	\begin{pmatrix}
	\varphi & \varphi_{\oo{z_j}}\\
	\varphi_{z_i} & \varphi_{z_i\oo{z_j}}
	\end{pmatrix},
\end{equation}
We use Lemma \ref{lemmah} and the well-known formula $G=J(\varphi)/\varphi^{n+1}$ to further simplify \eqref{KE equation general} into
\begin{equation}\label{KE equation MA general}
	J(\varphi)(z, \overline{z})=(n+1)^n\varphi^{n+2}(z, \oo{z})
\end{equation}
for $z \in \bB^n-Z.$ Since both sides of \eqref{KE equation MA general} are in fact real-analytic in $\bB^n,$ we see \eqref{KE equation MA general} holds on $\bB^n$ by continuity. We pause here to observe that if $\Gamma$ is such that $\varphi(0,0)\neq 0$, then it follows that $\log \varphi$ extends as the potential of a K\"ahler-Einstein metric in the whole unit ball $\bB^n$, which by uniqueness of the Cheng-Yau metric can be used to directly conclude that $\Gamma=\{\id\}$; this was previously observed in \cite[Corollary 5.4]{HuLi}. Now, let us compute $J(\varphi)$. Clearly, we have
\begin{align*}
	\varphi_{z_i}=(n+1)\sum_{\gamma\in \Gamma} \frac{ \oo{d_{\gamma}} \cdot \oo{(\gamma z)_i}}{(1-\langle z, \oo{\gamma z}\rangle)^{n+2}},
	\qquad
	\varphi_{\oo{z_j}}=(n+1)\sum_{\gamma\in \Gamma} \frac{ \oo{d_{\gamma}}\cdot (z^{\intercal}\oo{\gamma})_j}{(1-\langle z, \oo{\gamma z}\rangle)^{n+2}},
\end{align*}
where $(\gamma z)_i$ denotes the $i$-th entry of the column vector $\gamma z$ and similarly $(z^{\intercal}\oo{\gamma})_j$ denotes the $j$-th entry of the row vector $z^{\intercal}\oo{\gamma}$.
By differentiating both sides one more time, we obtain
\begin{align*}
	\varphi_{z_i\oo{z_j}}
	=&(n+1)\sum_{\gamma\in \Gamma} \oo{d_{\gamma}}\cdot\frac{  \oo{\gamma_{ij}}(1-\langle z, \oo{\gamma z}\rangle)+(n+2)\oo{(\gamma z)_i} (z^{\intercal}\oo{\gamma})_j}{(1-\langle z, \oo{\gamma z}\rangle)^{n+3}},
\end{align*}
where $\gamma_{ij}$ is the $(i,j)$ component of the matrix $\gamma$.

For each $\gamma\in \Gamma, 0 \leq j \leq n$, we define a column vector-valued function  $\xi_j(\gamma): \mathbb{B}^n \rightarrow \mathbb{C}^{n+1}$  in the variables $(z, \oo{z})$ as follows:
\begin{align*}
	\xi_0(\gamma)(z,\oo z):=\begin{pmatrix}
	1-\langle z, \gamma\oo{z} \rangle\\
	(n+1)\gamma\oo{z}
	\end{pmatrix}
	\quad \mbox{and} \quad \xi_j(\gamma)(z,\oo z):=\begin{pmatrix}
		z^{\intercal}(\gamma)_j\\
		\frac{  (\gamma)_j(1-\langle z, \gamma\oo{ z}\rangle)+(n+2)\gamma\oo{z} (z^{\intercal}(\gamma)_j)}{1-\langle z, \gamma\oo{z}\rangle}
	\end{pmatrix}  \mbox{ for } 1\leq j\leq n,
\end{align*}
where $(\gamma)_j$ is the $j$-th column vector of the matrix $\gamma$.
Given any $(n+1)$ (possibly repeated) elements $\gamma_0,\cdots,\gamma_n$ in $\Gamma$, we define a matrix-valued function $A(\gamma_0,\cdots,\gamma_n): \mathbb{B}^n \rightarrow \mathbb{C}^{(n+1)^2}$ as follows:
\begin{equation*}
	A(\gamma_0,\cdots,\gamma_n)=\begin{pmatrix}
	\xi_0(\gamma_0) & \cdots & \xi_n(\gamma_n)
	\end{pmatrix}.
\end{equation*}
We emphasize that the map $A(\gamma_0,\cdots,\gamma_n)$ sends a point $z \in \bB^n$ to an $(n+1) \times (n+1)$ matrix. We then expand the determinant in \eqref{MA operator} by multi-linearity with respect to columns. We obtain the following formula:
\begin{align}\label{eqnjphi}
	J(\varphi)=\sum_{\gamma_0,\cdots,\gamma_n\in \Gamma}\frac{(n+1)^n\,\oo{d_{\gamma_0}}\cdots\oo{d_{\gamma_n}}}{\prod_{i=0}^n(1-\langle z, \oo{\gamma_i z}\rangle)^{n+2}} \,\det\bigl(A(\oo{\gamma_0},\cdots,\oo{\gamma_n})\bigr).
\end{align}


\subsection{Abelian group case}
From now on, we will assume that $\Gamma$ is a finite, abelian, fixed point free subgroup of $U(n)$.
\begin{lemma}\label{lem gamma is cyclic}
	If $\Gamma \subseteq U(n)$ is finite, abelian, and fixed point free, then it is cyclic.
\end{lemma}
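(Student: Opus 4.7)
The plan is to combine simultaneous diagonalization with the structure theorem for finite abelian groups. Since $\Gamma$ consists of pairwise commuting unitary matrices, there exists an orthonormal basis of $\mathbb{C}^n$ in which every $\gamma \in \Gamma$ is diagonal, $\gamma = \mathrm{diag}(\zeta_1(\gamma), \ldots, \zeta_n(\gamma))$, with each $\zeta_i(\gamma)$ a root of unity. A unitary matrix has a fixed point on $\partial \mathbb{B}^n$ if and only if $1$ appears among its eigenvalues, so the fixed point free hypothesis translates to the clean statement: for every $\gamma \neq \id$ and every $i \in \{1, \ldots, n\}$, one has $\zeta_i(\gamma) \neq 1$.

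I would then argue by contradiction. If $\Gamma$ is not cyclic, the structure theorem for finite abelian groups furnishes a subgroup isomorphic to $(\mathbb{Z}/p\mathbb{Z})^2$ for some prime $p$. Pick generators $\alpha, \beta$ of this subgroup and, using the common diagonalization, write $\alpha = \mathrm{diag}(\omega^{a_1}, \ldots, \omega^{a_n})$ and $\beta = \mathrm{diag}(\omega^{b_1}, \ldots, \omega^{b_n})$ with $\omega = e^{2 \pi i / p}$ and $a_i, b_i \in \mathbb{Z}/p\mathbb{Z}$. Applying the fixed point free condition to $\alpha$ itself forces $a_i \not\equiv 0 \pmod{p}$ for every $i$ (and similarly for the $b_i$); in particular each pair $(a_i, b_i)$ is a nonzero vector in $(\mathbb{Z}/p\mathbb{Z})^2$.

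The heart of the argument is then a one-line count in $(\mathbb{Z}/p\mathbb{Z})^2$. Fix any $i$; the equation $s a_i + t b_i \equiv 0 \pmod{p}$ defines a line through the origin in $(\mathbb{Z}/p\mathbb{Z})^2$ and therefore has $p - 1 \geq 1$ nonzero solutions $(s, t)$. For any such $(s, t)$ the element $\alpha^s \beta^t \in \Gamma$ is non-identity (since $\alpha^s \beta^t = \id$ only when $(s, t) \equiv (0, 0)$), yet its $i$-th diagonal entry $\omega^{s a_i + t b_i}$ equals $1$, contradicting the fixed point free hypothesis. I do not anticipate any serious obstacle: the only point demanding care is the faithful translation of the geometric fixed point free condition into the eigenvalue language, after which the conclusion reduces to elementary linear algebra over $\mathbb{F}_p$.
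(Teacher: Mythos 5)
Your proof is correct, but it takes a genuinely different route from the paper's. Both arguments begin the same way: simultaneously diagonalize the commuting unitaries and observe that fixed-point-freeness on $\partial\mathbb{B}^n$ is exactly the statement that no element of $\Gamma-\{\id\}$ has $1$ as an eigenvalue. From there the paper argues directly rather than by contradiction: the homomorphism $\pi_1:\Gamma\to U(1)$ sending $\gamma$ to its first diagonal entry has trivial kernel, because any $\gamma\in\ker\pi_1$ fixes the boundary point $(1,0,\dots,0)$; hence $\Gamma$ embeds into $U(1)$, and every finite subgroup of $U(1)$ is cyclic. You instead invoke the structure theorem for finite abelian groups to extract a subgroup isomorphic to $(\mathbb{Z}/p\mathbb{Z})^2$ and rule it out by counting solutions of $sa_i+tb_i\equiv 0\pmod p$; this count is sound (the solution set is a line since $(a_i,b_i)\neq(0,0)$, so it contains a nonzero $(s,t)$, and $\alpha^s\beta^t\neq\id$ because $\alpha,\beta$ generate the subgroup as an internal direct product). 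The paper's approach is shorter and buys something used immediately afterward: the injectivity of each coordinate projection is precisely what lets the authors write the generator as $\mathrm{diag}(\varepsilon^{t_1},\dots,\varepsilon^{t_n})$ with every $\varepsilon^{t_j}$ a primitive $m$-th root of unity. Your approach is the abelian case of the classical fact that fixed-point-free finite groups have cyclic Sylow subgroups, so it is the more robust template if one wanted to drop or weaken the abelian hypothesis, at the cost of needing the structure theorem and a contradiction argument for what is here a one-line embedding.
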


\begin{proof}
	Since $\Gamma\subseteq U(n)$ is abelian, by basic Lie group theory, $\Gamma$ is contained in a conjugate of the maximal torus $U(1)\times \cdots \times U(1)$. Replacing $\Gamma$ by an appropriate conjugate of $\Gamma$, we can assume $\Gamma\subseteq U(1)\times \cdots \times U(1)$. Consider the group homomorphism $\pi_1: \Gamma \rightarrow U(1)$ defined by
	\begin{equation*}
		\pi_1(\gamma):=\gamma_{11} \quad \mbox{ for } \gamma=\text{diag}(\gamma_{11},\cdots,\gamma_{nn}).
	\end{equation*}
Since $\pi_1(\Gamma)\subseteq U(1)$ is finite, it must be cyclic. To verify that $\Gamma$ is also cyclic, it is sufficient to prove that $\Gamma$ is actually isomorphic to $\pi_1(\Gamma)$. We will conclude this by showing that $\pi_1$ is injective. Thus, take $\gamma\in \Gamma$ such that
	\begin{equation*}
		\pi_1(\gamma)=1.
	\end{equation*}
	It follows that the point $(1,0,\cdots,0)$ is a fixed point of $\gamma$. Since $\Gamma$ is fixed point free, $\gamma$ is the identity matrix. So $\pi_1$ is injective and the proof is complete.
\end{proof}

By the proof of Lemma \ref{lem gamma is cyclic}, we can assume $\Gamma\subseteq U(1)\times \cdots \times U(1)$. As $\Gamma$ is actually cyclic, we can write
\begin{equation*}
\Gamma=\{\gamma, \gamma^2,\cdots,\gamma^m=\id\}
\end{equation*}
for some generator
\begin{equation*}
\gamma=\begin{pmatrix}
\varepsilon_1 & &\\
& \ddots &\\
& & \varepsilon_n
\end{pmatrix}.
\end{equation*}
Here $m \geq 2$ by the nontriviality of $\Gamma.$ Since $\Gamma$ is fixed point free, $\varepsilon_1,\cdots, \varepsilon_n$ are primitive $m$-th roots of unity. By setting $\varepsilon:=\varepsilon_1$, for $1\leq j\leq n$ we can write $\varepsilon_j$ in the form of
\begin{equation*}
\varepsilon_{j}=\varepsilon^{t_j}, \quad \mbox{for some } 1\leq t_j\leq m-1 \mbox{ with }\gcd(t_j, m)=1.
\end{equation*}
Without loss of generality, we can assume
\begin{equation*}
	1=t_1\leq t_2\leq \cdots \leq t_n\leq m-1.
\end{equation*}

For any $\gamma\in \Gamma\subseteq U(1)\times \cdots \times U(1)$, note that $\gamma^{-1}=\oo{\gamma}^\intercal=\oo{\gamma}$. Hence, we can replace all $\oo{\gamma_j}$ by $\gamma_j$ in the sum in $J(\varphi)$ and obtain
\begin{align*}
J(\varphi)=\sum_{\gamma_0,\cdots,\gamma_n\in \Gamma}\frac{(n+1)^n\,d_{\gamma_0}\cdots d_{\gamma_n}}{\prod_{i=0}^n(1-\langle z, \gamma_i\oo{ z}\rangle)^{n+2}}\,
\det\bigl(A(\gamma_0,\cdots,\gamma_n)\bigr).
\end{align*}

Write $\gamma_j=\gamma^{k_j}$ for some $0\leq k_j\leq m-1$. Then $d_{\gamma_{j}}=\det \gamma_j=\varepsilon^{k_j(\sum_{i=1}^n t_i)}.$ Choose $z=z^*:=(z_1,0,\cdots,0)^{\intercal}$ with $ |z_1| < 1$ and set $x=z^*\cdot \oo{z^*}=z_1\oo{z_1}< 1$. Then at $z^*,$ we have
\begin{align*}
	\det\bigl(A(\gamma_0,\cdots,\gamma_n)\bigr)=&\begin{vmatrix}
	1-\varepsilon^{k_0}x & z_1\varepsilon^{k_1}& 0\\
	(n+1)\varepsilon^{k_0}\oo{w_1} & \frac{  \varepsilon^{k_1}(1-\varepsilon^{k_1}x)+(n+2)\varepsilon^{2k_1}x}{1-\varepsilon^{k_1}x} & 0 \\
	0&0&\begin{pmatrix}
	\varepsilon_2^{k_2} & &\\
	& \ddots &\\
	& & \varepsilon_n^{k_n}
	\end{pmatrix}
	\end{vmatrix}
	\\
	=&\varepsilon^{k_1+\sum_{j=2}^nk_2t_2}\Bigl( 1-(n+2)\varepsilon^{k_0}x+(n+2)\frac{1-\varepsilon^{k_0}x}{1-\varepsilon^{k_1}x}\,\varepsilon^{k_1}x \Bigr).
\end{align*}

In the following, we use the notation
\begin{itemize}
	\item $T=(t_1,\cdots, t_n)$, where $t_1=1$.
	\item $K=(k_0, k_1, \cdots, k_n)$ and $K'=(k_1,\cdots, k_n)$.
	\item $|T|=\sum_{j=1}^{n}t_j$ and $|K'|=\sum_{j=1}^nk_j$.
\end{itemize}
Using these notations, we have, at $z=z^*$,
\begin{align*}
	J(\varphi)(z^*, \oo{z^*})=
	(n+1)^n\sum_{k_0,\cdots,k_n=0}^{m-1}\frac{\varepsilon^{|K|\cdot|T|+K'\cdot T}}{\prod_{i=0}^n(1-\varepsilon^{k_i}x)^{n+2}}
	\Bigl(1-(n+2)\varepsilon^{k_0}x+(n+2)\frac{1-\varepsilon^{k_0}x}{1-\varepsilon^{k_1}x}\,\varepsilon^{k_1}x \Bigr).
\end{align*}
If we set
\begin{align*}
	\text{I}:=&\sum_{k_0,\cdots,k_n=0}^{m-1}\frac{\varepsilon^{|K|\cdot|T|+K'\cdot T}}{\prod_{i=0}^n(1-\varepsilon^{k_i}x)^{n+2}},
	\\
	\text{II}:=&-(n+2)\sum_{k_0,\cdots,k_n=0}^{m-1}\frac{\varepsilon^{|K|\cdot|T|+K'\cdot T+k_0}x}{\prod_{i=0}^n(1-\varepsilon^{k_i}x)^{n+2}},
	\\
	\text{III}:=&(n+2)\sum_{k_0,\cdots,k_n=0}^{m-1}\frac{\varepsilon^{|K|\cdot|T|+K'\cdot T+k_1}x}{\prod_{i=0}^n(1-\varepsilon^{k_i}x)^{n+2}}
	\frac{1-\varepsilon^{k_0}x}{1-\varepsilon^{k_1}x},
\end{align*}
then
\begin{equation}\label{J(phi) in three terms}
	J(\varphi)(z^*, \oo{z^*})=(n+1)^n\bigl(\text{I}+\text{II}+\text{III}\bigr).
\end{equation}

We pause to introduce the following definition and lemmas. Let $\varepsilon$ be as above.  Write $\mathbb{D}$ for the open unit disc in $\mathbb{C}.$
\begin{defn}
	Let $t\in \mathbb{Z}, p\in \mathbb{Z}^+$. Define $f_{t,p}: \mathbb{D} \rightarrow\bC$ as
	\begin{equation*}
		f_{t,p}(x):=\sum_{k=0}^{m-1}\frac{1}{\varepsilon^{tk}(\varepsilon^k-x)^p}.
	\end{equation*}
\end{defn}

\begin{lemma}\label{lemma 2 for f} The following holds:
	\begin{equation*}
		f_{t,p}'(x)=pf_{t,p+1}(x).
	\end{equation*}
In general, for $j \geq 2,$ $f_{t,p}^{(j)}(x)=p(p+1) \cdots (p+j-1) f_{t, p+j}(x).$
\end{lemma}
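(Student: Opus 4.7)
The plan is essentially to differentiate term-by-term, since the sum defining $f_{t,p}$ is finite (only $m$ terms) and each term is smooth in $x$ on $\mathbb{D}$ (the poles $\varepsilon^k$ all lie on the unit circle, hence outside $\mathbb{D}$). First I would verify the base case: for each fixed $k$, one has
\begin{equation*}
\frac{d}{dx}\frac{1}{(\varepsilon^k-x)^p} = \frac{p}{(\varepsilon^k-x)^{p+1}},
\end{equation*}
since the chain rule produces a $-p(\varepsilon^k-x)^{-p-1}\cdot(-1)$. Dividing by $\varepsilon^{tk}$ (a constant in $x$) and summing over $k$ from $0$ to $m-1$ yields $f_{t,p}'(x)=p\,f_{t,p+1}(x)$ immediately.

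For the general statement, I would argue by induction on $j$. The base case $j=1$ is exactly what we just showed. Assuming the identity holds for some $j\geq 1$, that is,
\begin{equation*}
f_{t,p}^{(j)}(x) = p(p+1)\cdots(p+j-1)\,f_{t,p+j}(x),
\end{equation*}
one differentiates both sides once more and applies the $j=1$ case (with $p$ replaced by $p+j$) to the right-hand side, obtaining the factor $(p+j)$ and shifting the second index to $p+j+1$. This produces the claimed identity at step $j+1$.

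There is no real obstacle here — the statement is purely a routine computation whose only subtlety is noting that the finite sum allows differentiation to be passed inside without any analytic justification beyond smoothness of each summand on $\mathbb{D}$. The lemma is presumably stated as a standalone fact because it will be invoked repeatedly in the Taylor expansion arguments of Section \ref{Sec combinatorial lemma}.
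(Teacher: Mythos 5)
Your proof is correct and follows essentially the same route as the paper: term-by-term differentiation of the finite sum for the base case, then induction on $j$ for the general statement. The added remark that the poles $\varepsilon^k$ lie on the unit circle and thus outside $\mathbb{D}$ is a harmless extra justification not spelled out in the paper.
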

\begin{proof}Note
	\begin{align*}
		f_{t,p}'(x)=\sum_{k=0}^{m-1} \frac{p}{\varepsilon^{tk}(\varepsilon^k-x)^{p+1}}=pf_{t,p+1}(x).
	\end{align*}
This proves the first statement. The latter assertion follows from the first statement and an inductive argument.
\end{proof}

\begin{lemma}\label{lemma 1 for f} The following hold:
\begin{enumerate}
	\item
\begin{equation*}
	f_{t,p}(0)=\begin{cases}
	0 & \mbox{ if } m \nmid (t+p),\\
	m & \mbox{ if } m \mid (t+p).
	\end{cases}
	\end{equation*}
\item For $j \geq 1,$
\begin{equation*}
f_{t,p}^{(j)}(0)=\begin{cases}
0 & \mbox{ if } m \nmid (t+p+j),\\
m\prod_{i=0}^{i=j-1}(p+i) & \mbox{ if } m \mid (t+p+j).
\end{cases}
\end{equation*}
\end{enumerate}
\end{lemma}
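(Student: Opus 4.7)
The plan is to prove (1) by direct evaluation at $x=0$ and then deduce (2) immediately from (1) together with Lemma \ref{lemma 2 for f}.

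For (1), substituting $x=0$ into the definition gives
\[
f_{t,p}(0)=\sum_{k=0}^{m-1}\frac{1}{\varepsilon^{tk}\cdot(\varepsilon^k)^p}=\sum_{k=0}^{m-1}\varepsilon^{-(t+p)k}.
\]
Since $\varepsilon$ is a primitive $m$-th root of unity, $\varepsilon^{-(t+p)}$ is again an $m$-th root of unity; call it $\zeta$. The sum is then the standard geometric sum $\sum_{k=0}^{m-1}\zeta^k$, which equals $m$ precisely when $\zeta=1$, i.e., when $m\mid(t+p)$, and equals $\frac{1-\zeta^m}{1-\zeta}=0$ otherwise. This gives the two cases of (1).

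For (2), Lemma \ref{lemma 2 for f} (by the inductive statement already recorded) gives
\[
f_{t,p}^{(j)}(x)=\Bigl(\prod_{i=0}^{j-1}(p+i)\Bigr)f_{t,p+j}(x).
\]
Evaluating at $x=0$ and applying (1) with $p$ replaced by $p+j$ yields the stated dichotomy: the value is $0$ if $m\nmid(t+p+j)$, and $m\prod_{i=0}^{j-1}(p+i)$ if $m\mid(t+p+j)$.

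There is no real obstacle here; the content of the lemma is entirely the orthogonality relation for characters of the cyclic group $\mathbb{Z}/m\mathbb{Z}$, and Lemma \ref{lemma 2 for f} already reduces higher derivatives to values of $f_{t,p+j}$ at $0$. Thus the whole proof is a two-line computation plus a citation of the previous lemma.
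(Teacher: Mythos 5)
Your proposal is correct and follows essentially the same route as the paper: evaluate $f_{t,p}(0)=\sum_{k=0}^{m-1}\varepsilon^{-k(t+p)}$ and invoke the orthogonality of $m$-th roots of unity for part (1), then combine with Lemma \ref{lemma 2 for f} for part (2). Your writeup is a bit more explicit about the geometric-sum step, but the argument is the same.
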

\begin{proof} To prove part (1), we note that
	\begin{equation*}
	f_{t,p}(0)=\sum_{k=0}^{m-1}\varepsilon^{-k(t+p)}.
	\end{equation*}
	Then the result in part (1) follows directly by the fact that $\varepsilon$ is a primitive $m$-th root of unity.
Part (2) follows from part (1) and Lemma \ref{lemma 2 for f}.
\end{proof}

\begin{lemma}\label{lemma46} The following holds:
	\begin{equation*}
		\sum_{k=0}^{m-1}\frac{\varepsilon^{t k}}{(1-\varepsilon^kx)^p}=f_{t-p,p}(x).
	\end{equation*}
\end{lemma}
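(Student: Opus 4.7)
The plan is to prove the identity by a direct algebraic manipulation followed by a reindexing of the sum. The key observation is the factorization
\[
1-\varepsilon^{k}x=\varepsilon^{k}(\varepsilon^{-k}-x),
\]
which allows us to convert the left hand side into a form resembling the defining expression of $f_{t-p,p}(x)$.

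Raising the identity above to the $p$-th power gives $(1-\varepsilon^{k}x)^{p}=\varepsilon^{kp}(\varepsilon^{-k}-x)^{p}$, so
\[
\frac{\varepsilon^{tk}}{(1-\varepsilon^{k}x)^{p}}=\frac{\varepsilon^{(t-p)k}}{(\varepsilon^{-k}-x)^{p}}.
\]
Summing from $k=0$ to $m-1$, I would then reindex by the change of variable $k\mapsto m-k$ (equivalently, $k\mapsto -k\bmod m$), which is a bijection of $\{0,1,\dots,m-1\}$ fixing $0$. Under this reindexing $\varepsilon^{-k}$ becomes $\varepsilon^{k}$ and $\varepsilon^{(t-p)k}$ becomes $\varepsilon^{-(t-p)k}=1/\varepsilon^{(t-p)k}$, yielding
\[
\sum_{k=0}^{m-1}\frac{\varepsilon^{tk}}{(1-\varepsilon^{k}x)^{p}}=\sum_{k=0}^{m-1}\frac{1}{\varepsilon^{(t-p)k}(\varepsilon^{k}-x)^{p}}=f_{t-p,p}(x),
\]
which is the claimed identity.

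There is essentially no obstacle here; the only small point to verify is that the reindexing $k\mapsto m-k$ indeed permutes the residues modulo $m$ and correctly transforms the powers of $\varepsilon$, which is immediate from $\varepsilon^{m}=1$.
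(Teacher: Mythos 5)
Your proof is correct and is essentially the paper's own argument: both rest on the factorization $1-\varepsilon^{k}x=\varepsilon^{k}(\varepsilon^{-k}-x)$ together with the reindexing $k\mapsto -k\pmod m$, merely performed in the opposite order (the paper reindexes first and then factors).
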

\begin{proof}
	\begin{align*}
		\sum_{k=0}^{m-1}\frac{\varepsilon^{t k}}{(1-\varepsilon^kx)^p}
		=\sum_{k=0}^{m-1}\frac{1}{\varepsilon^{t k}(1-\varepsilon^{-k}x)^p}
		=\sum_{k=0}^{m-1}\frac{1}{\varepsilon^{(t-p) k}(\varepsilon^k-x)^p}=f_{t-p,p}(x),
	\end{align*}
	where the first equality follows from the fact that $\varepsilon$ is a primitive $m$-th root of unity.
\end{proof}

Now, using the above notation and Lemma \ref{lemma46}, we shall express $J(\varphi)(z^*, \oo{z^*})$ in terms of $f_{t,p}$.
\begin{flalign*}
	\quad\text{I}
	=&\sum_{k_0,\cdots,k_n=0}^{m-1}\frac{\varepsilon^{|K|\cdot|T|+K'\cdot T}}{\prod_{i=0}^n(1-\varepsilon^{k_i}x)^{n+2}}\\
	=&\sum_{k_0=0}^{m-1}\frac{\varepsilon^{k_0|T|}}{(1-\varepsilon^{k_0}x)^{n+2}}\, \sum_{k_1=0}^{m-1}\frac{\varepsilon^{k_1(|T|+t_1)}}{(1-\varepsilon^{k_1}x)^{n+2}}\cdots \sum_{k_n=0}^{m-1}\frac{\varepsilon^{k_n(|T|+t_n)}}{(1-\varepsilon^{k_n}x)^{n+2}}\\
	=&f_{|T|-(n+2), n+2}(x)\, f_{|T|+t_1-(n+2), n+2}(x)\cdots f_{|T|+t_n-(n+2), n+2}(x). &&
\end{flalign*}
\begin{flalign*}
	\quad\text{II}=&-(n+2)x\sum_{k_0,\cdots,k_n=0}^{m-1}\frac{\varepsilon^{|K|\cdot|T|+K'\cdot T+k_0}}{\prod_{i=0}^n(1-\varepsilon^{k_i}x)^{n+2}}\\
	=&-(n+2)x\sum_{k_0=0}^{m-1}\frac{\varepsilon^{k_0(|T|+1)}}{(1-\varepsilon^{k_0}x)^{n+2}}\, \sum_{k_1=0}^{m-1}\frac{\varepsilon^{k_1(|T|+t_1)}}{(1-\varepsilon^{k_1}x)^{n+2}}\cdots \sum_{k_n=0}^{m-1}\frac{\varepsilon^{k_n(|T|+t_n)}}{(1-\varepsilon^{k_n}x)^{n+2}}\\
	=&-(n+2)x\,f_{|T|-(n+1), n+2}(x)\, f_{|T|+t_1-(n+2), n+2}(x)\cdots f_{|T|+t_n-(n+2), n+2}(x). &&
\end{flalign*}
\begin{flalign*}
	\quad\text{III}=&(n+2)x\sum_{k_0,\cdots,k_n=0}^{m-1}\frac{\varepsilon^{|K|\cdot|T|+K'\cdot T+k_1}}{\prod_{i=0}^n(1-\varepsilon^{k_i}x)^{n+2}}
	\frac{1-\varepsilon^{k_0}x}{1-\varepsilon^{k_1}x}\\
	=&(n+2)x\sum_{k_0=0}^{m-1}\frac{\varepsilon^{k_0|T|}}{(1-\varepsilon^{k_0}x)^{n+1}}\, \sum_{k_1=0}^{m-1}\frac{\varepsilon^{k_1(|T|+t_1+1)}}{(1-\varepsilon^{k_1}x)^{n+3}} \,
	\sum_{k_2=0}^{m-1}\frac{\varepsilon^{k_n(|T|+t_2)}}{(1-\varepsilon^{k_2}x)^{n+2}}
	\cdots \sum_{k_n=0}^{m-1}\frac{\varepsilon^{k_n(|T|+t_n)}}{(1-\varepsilon^{k_n}x)^{n+2}}\\
	=&(n+2)x\,f_{|T|-(n+1), n+1}(x)\, f_{|T|+t_1-(n+2), n+3}(x)\, f_{|T|+t_2-(n+2), n+2}(x)\cdots f_{|T|+t_n-(n+2), n+2}(x). &&
\end{flalign*}

Set
\begin{align}\label{P and Q}
\begin{split}
P:=&f_{|T|-(n+2), n+2} f_{|T|-(n+1), n+2}-(n+2)x \bigl(f_{|T|-(n+1), n+2}^2-f_{|T|-(n+1), n+1} f_{|T|-(n+1), n+3}\bigr),\\
Q:=&f_{|T|+t_2-(n+2), n+2}\cdots f_{|T|+t_n-(n+2), n+2}.
\end{split}
\end{align}
By \eqref{J(phi) in three terms} and the fact $t_1=1$, we conclude that $J(\varphi)(z^*,\oo{z^*})$ can be written as
\begin{align}\label{eqnjpq}
	J(\varphi)(z^*, \oo{z^*})=(n+1)^nP(x)\,Q(x).
\end{align}

Moreover, at $z=z^*=(z_1,0,\cdots,0)^{\intercal}$ we can simplify $\varphi$ as
\begin{align*}
	\varphi(z^*, \oo{z^*})=\sum_{\gamma\in \Gamma} \frac{\oo{d_{\gamma}}}{(1-\langle z^*, \oo{\gamma z^*}\rangle)^{n+1}}
	=\sum_{\gamma\in \Gamma}\frac{d_{\gamma}}{(1-\langle z^*, \gamma \oo{z^*}\rangle)^{n+1}}=\sum_{k=0}^{m-1}\frac{\varepsilon^{k\,|T|}}{(1-\varepsilon^k x)^{n+1}}=f_{|T|-(n+1),n+1}(x).
\end{align*}
The second equality here is due to the fact that $\Gamma\subseteq U(1)\times\cdots\times U(1)\subseteq U(n)$, as also explained above.
By the above expression for $\varphi$ and \eqref{eqnjpq}, we conclude that at $z=z^*,$ the K\"ahler-Einstein equation \eqref{KE equation MA general} is reduced to, for $x \in [0, 1) \subseteq \mathbb{R},$ 
\begin{equation}\label{KE equation cyclic}
	f_{|T|-(n+1),n+1}^{n+2}(x)=P(x) Q(x),
\end{equation}
where $P, Q$ are defined in \eqref{P and Q}. Since both sides of \eqref{KE equation cyclic} are holomorphic in $\mathbb{D},$ we conclude that \eqref{KE equation cyclic} in fact holds for all $x \in \mathbb{D}.$

\subsection{Reduction to combinatorial inequalities}
We shall take the Taylor expansion of both sides in \eqref{KE equation cyclic} at $x=0$. By comparing the Taylor coefficients, we shall prove that \eqref{KE equation cyclic} cannot hold if $m=|\Gamma|\geq 2$ and $n\geq 2$, which will establish Theorem \ref{main thm}. We shall proceed by dividing the proof into several cases.

\textbf{Case I.} $m \mid |T|$.

As $m\mid |T|$, $m \nmid |T|+1$. Lemma \ref{lemma 1 for f} yields that
\begin{equation*}
	f_{|T|-(n+1),n+1}(0)=m, \qquad f_{|T|-(n+1),n+2}(0)=0.
\end{equation*}
Therefore, at $x=0$
\begin{equation*}
	f_{|T|-(n+1),n+1}^{n+2}(0)=m^{n+2}\neq 0=P(0)\cdot Q(0),
\end{equation*}
which implies that the K\"ahler-Einstein equation \eqref{KE equation cyclic} does not hold.

\textbf{Case II.} $m \nmid |T|$ and $m\mid |T|+1$.

In this case, we have
\begin{equation*}
	m\nmid |T|+2, \cdots, m\nmid |T|+m,\, m\mid |T|+m+1.
\end{equation*}
We take the Taylor expansion of $f_{|T|-(n+2), n+2}$ at $x=0$.
\begin{align}\label{eqnty}
\begin{split}
	f_{|T|-(n+2),n+2}(x)
	=&\sum_{j=0}^{m+1}\frac{f_{|T|-(n+2),n+2}^{(j)}(0)}{j!}x^j+O({m+2})\\
	=&\binom{n+2}{1}mx+\binom{n+m+2}{m+1}mx^{m+1}+O({m+2}).
\end{split}
\end{align}
Here for a holomorphic function $h$ in a neighborhood $U \subseteq \mathbb{C}$ of $0$, we say $h$ is $O(j), j \geq 1, $ if $h^{(i)}(0)=0$ for all $0 \leq i < j.$
The last equality follows from Lemma \ref{lemma 1 for f}.

Similarly, we also have
\begin{align*}
	f_{|T|-(n+1),n+2}(x)
	=&m+\binom{m+n+1}{m}mx^m+O({m+1}),
	\\
	f_{|T|-(n+1),n+1}(x)
	=&\binom{n+1}{1}mx+\binom{m+n+1}{m+1}mx^{m+1}+O({m+2}),
	\\
	f_{|T|-(n+1),n+3}(x)
	=&\binom{m+n+1}{m-1}mx^{m-1}+O({m}).
\end{align*}
By \eqref{P and Q}, it follows that
\begin{align*}
	P=&	\left(\binom{n+2}{1}mx+\binom{n+m+2}{m+1}mx^{m+1}\right)\left(m+\binom{m+n+1}{m}mx^m \right)
	\\&-(n+2)x\left(m+\binom{m+n+1}{m}mx^m \right)^2
	\\&+(n+2)x\left(\binom{n+1}{1}mx+\binom{m+n+1}{m+1}mx^{m+1}\right)\binom{m+n+1}{m-1}mx^{m-1}+O({m+2})
	\\
	=&m^2\binom{n+m+1}{m}x^{m+1}\left(-n-2+\frac{m+n+2}{m+1}+(n+1)m\right)+O({m+2})
	\\
	=&\frac{m^4(n+1)}{m+1}\binom{n+m+1}{m}x^{m+1}+O({m+2}).
\end{align*}

Recall that
\begin{equation*}
	Q=f_{|T|+t_2-(n+2), n+2}\cdots f_{|T|+t_n-(n+2), n+2},
\end{equation*}
where $1=t_1\leq t_2\leq \cdots \leq t_n\leq m-1$.
Let $1\leq a\leq n$ be such that
\begin{equation*}
	1=t_1=\cdots=t_a<t_{a+1}\leq \cdots\leq t_n.
\end{equation*}
When $a=n,$ the above means that all $t_j'$s equal $1$. Now for $1\leq j\leq a$, we have
\begin{equation*}
	f_{|T|+t_j-(n+2),n+2}(x)=f_{|T|-(n+1),n+2}(x)=m+O(1).
\end{equation*}
And for $a+1\leq j\leq n$, $|T|+1<|T|+t_j<|T|+m+1$. We get, by a similar computation as in \eqref{eqnty},
\begin{equation*}
	f_{|T|+t_j-(n+2),n+2}(x)=\binom{n+m+2-t_j}{m+1-t_j}m x^{m+1-t_j}+O({m+2-t_j}).
\end{equation*}
Thus,
\begin{align*}
	Q=m^{n-1}\binom{m+n+2-t_{a+1}}{m+1-t_{a+1}}\cdots \binom{m+n+2-t_{n}}{m+1-t_{n}} x^{(m+1)(n-a)-(t_{a+1}+\cdots+t_n)}+\text{h.o.t},
\end{align*}
where h.o.t denotes the higher order term.
Combining this with the Taylor expansion of $P$, the lowest order term in $PQ$ at $x=0$ is
\begin{equation}\label{eq:PQat0}
	\frac{m^{n+3}(n+1)}{m+1}\binom{n+m+1}{m}\binom{m+n+2-t_{a+1}}{m+1-t_{a+1}}\cdots \binom{m+n+2-t_{n}}{m+1-t_{n}}x^{(m+1)(n-a+1)-\sum_{j=a+1}^nt_j}.
\end{equation}

When $a=n, \sum_{j=a+1}^nt_j$ is a null sum and equals zero. Furthermore, we have
\begin{equation}\label{eq:fblahat0-1}
	f^{n+2}_{|T|-(n+1),n+1}=(n+1)^{n+2}m^{n+2}x^{n+2}+\text{h.o.t}\,.
\end{equation}
Suppose that the K\"ahler-Einstein equation \eqref{KE equation cyclic} holds. Then $f^{n+2}_{|T|-(n+1),n+1}$ and $PQ$ must share the same Taylor expansion at $x=0$. In particular, their lowest order terms, where the former is found in \eqref{eq:fblahat0-1} and the latter in \eqref{eq:PQat0}, must have the same degree, that is, $n+2=(m+1)(n-a)-\sum_{j=a+1}^nt_j$. In this case, however, the coefficients of the lowest order terms do not match by the following lemma.
\begin{lemma}\label{lemma combinatorial 1}
	Suppose $m, n\geq 2$, $1\leq a\leq n$ and $1=t_1=\cdots=t_a<t_{a+1}\leq \cdots\leq t_n\leq m-1$. If $n+2=(m+1)(n-a+1)-\sum_{j=a+1}^n t_j$, then
	\begin{equation*}
		(n+1)^{n+1}(m+1)>m\binom{n+m+1}{m}\binom{m+n+2-t_{a+1}}{m+1-t_{a+1}}\cdots \binom{m+n+2-t_{n}}{m+1-t_{n}}.
	\end{equation*}
In the case $a=n,$ i.e., all $t_j'$s equal $1$, the above is reduced to the following: If $m=n+1,$ then
$$(n+1)^{n+1}(m+1)>m\binom{n+m+1}{m}.$$
\end{lemma}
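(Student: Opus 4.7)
The plan is to substitute $v_0 := m$ and $v_i := m + 1 - t_{a+i}$ for $i = 1, \dots, r$, with $r := n-a$. By the symmetry of binomial coefficients, each factor on the right-hand side becomes $\binom{n+1+v_i}{n+1}$; a direct computation then shows that the hypothesis collapses to the clean identity $\sum_{i=0}^{r} v_i = n+1$, while the range conditions $t_{a+i} \in [2, m-1]$ translate to $v_i \in [2, m-1]$ for every $i \geq 1$. Dividing the target inequality by $(n+1)^{n+1}$ and using $\sum v_i = n+1$ reduces the problem to showing
$$ m \prod_{i=0}^{r} R(v_i) < m + 1, \qquad R(v) := \prod_{k=1}^{v} a_k, \quad a_k := \frac{n+1+k}{k(n+1)}. $$
Three elementary facts about the $a_k$ drive the proof: $a_1 = (n+2)/(n+1) > 1$; every $a_k$ with $k \geq 2$ is strictly less than $1$ (since $n \geq 2$); and $(a_k)$ is strictly decreasing.

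\textbf{Case $a = n$.} The constraint forces $m = n+1$ and leaves only one binomial, so the target becomes $\prod_{k=1}^{n+1} a_k < a_1$, equivalently $\prod_{k=2}^{n+1} a_k < 1$. Since $n \geq 2$, the left-hand side is a product of at least two factors each strictly less than $1$, so the inequality is immediate.

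\textbf{Case $a < n$.} Here $r \geq 1$ and every $v_i \geq 2$; these constraints force $m \geq 3$ and hence $n \geq m+1 \geq 4$. For each $i$, the inequalities $v_i \geq 2$ and $a_k < 1$ for $k \geq 3$ give $R(v_i) = a_1 a_2 \prod_{k=3}^{v_i} a_k \leq a_1 a_2$, so $\prod_i R(v_i) \leq (a_1 a_2)^{r+1}$. A short computation shows $a_1 a_2 = (n+2)(n+3) / (2(n+1)^2) < 1$ for all $n \geq 3$ (equivalent to $n^2 - n - 4 > 0$), hence $\prod_i R(v_i) < 1 < (m+1)/m$, with room to spare.

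If there is any obstacle at all, it is the bookkeeping involved in the substitution and in ruling out the formally-forbidden configurations with small $n$ in the second case. The content of the proof, once the substitution is in place, consists entirely of the three elementary facts about the sequence $(a_k)$ stated above.
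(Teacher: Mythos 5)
Your proof is correct. The substitution $v_0=m$, $v_i=m+1-t_{a+i}$, the reduction of the hypothesis to $\sum_i v_i=n+1$, the normalization by $(n+1)^{n+1}$, and the bounds on $R(v_i)$ all check out; in particular $a<n$ does force $v_i\in[2,m-1]$ for $i\geq 1$, hence $m\geq 3$ and $n\geq m+1\geq 4$, which is what licenses $a_1a_2<1$ in that branch.

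The underlying estimate is the same as the paper's: your observation that $R(v)\leq a_1a_2<1$ for $v\geq 2$ and $n\geq 3$ is precisely the paper's elementary inequality $\binom{n+k}{k-1}<(n+1)^{k-1}$ for $n,k\geq 3$ (with $v=k-1$), proved by the same factor-by-factor argument. Where you genuinely diverge is in the case split. The paper splits on $n$: for $n=2$ it argues that the hypothesis forces $a=2$ and $m=3$ and then verifies the inequality numerically ($108>60$), while for $n\geq 3$ it applies the binomial bound to each factor to get the right-hand side $<m(n+1)^{n+1}$. You instead split on whether $a=n$. Your $a=n$ branch replaces the paper's explicit $n=2$ computation with the cleaner identity $(m+1)/m=a_1$ (since $m=n+1$), reducing the target to $\prod_{k=2}^{n+1}a_k<1$, which holds for all $n\geq 2$; this uniformly covers the small-$n$ case. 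Your $a<n$ branch then coincides in substance with the paper's $n\geq 3$ argument (it applies since $a<n$ forces $n\geq 4$). The net effect is a somewhat tidier proof with no ad hoc numerical check, at the cost of a bit more bookkeeping in the change of variables.
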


This is a contradiction and we thus conclude the K\"ahler-Einstein equation \eqref{KE equation cyclic} does not hold. The proof of Lemma \ref{lemma combinatorial 1} is left to Section \ref{Sec combinatorial lemma}.

\textbf{Case III.} $m\nmid |T|, m\nmid |T|+1,\cdots, m\nmid |T|+k-1$ and $m\mid |T|+k$ for some $2\leq k<m$.

We follow the same procedure as in Case II. Similarly as in \eqref{eqnty}, by using Lemma \ref{lemma 1 for f}, we have
\begin{align*}
f_{|T|-(n+2),n+2}(x)
=&\sum_{j=0}^{k+m}\frac{f_{|T|-(n+2),n+2}^{(j)}(0)}{j!}x^j+O(x^{k+m+1})\\
=&\binom{n+k+1}{k}mx^k+\binom{n+k+m+1}{k+m}mx^{k+m}+O({k+m+1}),
\end{align*}
and
\begin{align*}
f_{|T|-(n+1),n+2}(x)
=&\binom{n+k}{k-1}mx^{k-1}+\binom{k+m+n}{k+m-1}mx^{k+m-1}+O({k+m}),
\\
f_{|T|-(n+1),n+1}(x)
=&\binom{n+k}{k}mx^k+\binom{n+k+m}{k+m}mx^{k+m}+O({k+m+1}),
\\
f_{|T|-(n+1),n+3}(x)
=&\binom{n+k}{k-2}mx^{k-2}+\binom{n+k+m}{k+m-2}mx^{k+m-2}+O({k+m-1}).
\end{align*}
By \eqref{P and Q}, it follows that
\begin{align}\label{P for case III}
P=\binom{n+k}{k-1}\binom{n+k+m}{n}\frac{m^4}{k}x^{2k+m-1}+\text{h.o.t}.
\end{align}

Now we turn to the computation of the leading term in $Q$. Recall that $1=t_1\leq t_2\leq \cdots \leq t_n\leq m-1$. We shall divide the computation into two subcases: $k<t_n$ and $k\geq t_n$.

\textbf{Subcase III (a).} $k<t_n$.

Since $k\geq 2$, there exists some $1\leq a \leq n-1$ such that
\begin{equation*}
	1=t_1\leq \cdots \leq t_a\leq k<t_{a+1}\leq \cdots \leq t_n\leq m-1.
\end{equation*}
For $1\leq j\leq a$, as $|T|+1\leq |T|+t_j\leq |T|+k$, by the Taylor expansion and Lemma \ref{lemma 1 for f}, we have
\begin{equation*}
	f_{|T|+t_j-(n+2),n+2}(x)=\binom{n+1+k-t_j}{k-t_j}mx^{k-t_j}+O({k-t_j+1}).
\end{equation*}
For $a+1\leq j\leq n$, it follows that $|T|+k<|T|+t_j< |T|+k+m$. Thus, by the Taylor expansion and Lemma \ref{lemma 1 for f},
\begin{equation*}
 f_{|T|+t_j-(n+2),n+2}(x)=\binom{n+1+m+k-t_j}{m+k-t_j}mx^{m+k-t_j}+O({m+k-t_j+1}).
\end{equation*}
By \eqref{P and Q}, we obtain
\begin{equation*}
	Q=\prod_{j=2}^a\binom{n+1+k-t_j}{k-t_j}mx^{k-t_j} \cdot \prod_{j=a+1}^n\binom{n+1+m+k-t_j}{m+k-t_j}mx^{m+k-t_j}+\text{h.o.t}.
\end{equation*}
Therefore, \eqref{P for case III} and the above equality yield the leading term in the Taylor expansion of $PQ$ at $x=0$ as
\begin{align*}
	\frac{m^{n+3}}{k}\binom{n+k}{k-1}\binom{n+k+m}{n}	\prod_{j=2}^a\binom{n+1+k-t_j}{k-t_j}\cdot \prod_{j=a+1}^n\binom{n+1+m+k-t_j}{m+k-t_j}\cdot x^s,
\end{align*}
where
\begin{equation}\label{eq:seq}
	s=(n+1)k+m-1-\sum_{j=2}^at_j+\sum_{j=a+1}^n(m-t_j).
\end{equation}
On the other hand, the left hand side of \eqref{KE equation cyclic} satisfies
\begin{equation*}
	f_{|T|-(n+1),n+1}^{n+2}=\binom{n+k}{k}^{n+2}m^{n+2}x^{k(n+2)}+\text{h.o.t}.
\end{equation*}

Suppose that the K\"ahler-Einstein equation \eqref{KE equation cyclic} holds. Then $f^{n+2}_{|T|-(n+1),n+1}$ and $PQ$ must share the same Taylor expansion at $x=0$. In particular, their lowest order terms have the same degree, that is, $s=k(n+2)$, which in view of \eqref{eq:seq} implies that
\begin{equation}\label{eqncot}
		k=m-\sum_{j=1}^at_j+\sum_{j=a+1}^n(m-t_j).
	\end{equation}
 In this case, the coefficients of the lowest terms are, however, unequal by the following lemma.
\begin{lemma}\label{lemma combinatorial 2}
	Suppose $1\leq a\leq n-1$, $2\leq k\leq m-1$ and $1=t_1\leq \cdots \leq t_a\leq k<t_{a+1}\leq \cdots \leq t_n\leq m-1$. If \eqref{eqncot} holds, then
	\begin{align*}
		k\,\binom{n+k}{k}^{n+2}>m\,\binom{n+k+m}{n}	\prod_{j=1}^a\binom{n+1+k-t_j}{k-t_j}\cdot \prod_{j=a+1}^n\binom{n+1+m+k-t_j}{m+k-t_j}.
	\end{align*}
\end{lemma}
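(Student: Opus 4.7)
The plan is to establish the inequality by reducing to an extremal configuration of the $t_j$'s and then verifying the bound in that configuration. First, I would rewrite each factor in a uniform way: for $j \leq a$ the factor equals $\binom{n+1+k-t_j}{n+1}$, and for $j > a$ it equals $\binom{n+1+m+k-t_j}{n+1}$. Thus every factor has the form $\binom{N}{n+1}$ with $N \geq n+1$, and the constraint (\ref{eqncot}), equivalent to $|T| = m(n-a+1) - k$, determines the sum of the quantities $N - n - 1$ over all $n$ factors.

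Next, I would show that the right-hand side is maximized over the admissible set at a specific extremal configuration. Since $N \mapsto \binom{N}{n+1}$ is strictly log-convex, a local swap $t_j \leftrightarrow t_j+1$, $t_{j'} \leftrightarrow t_{j'}-1$ within the same block (both indices $\leq a$ or both $> a$) spreads the $N$'s apart and strictly increases the product. Hence the maximum within each block is attained by pushing all but one variable to the extreme of its range, so that each block contributes essentially one nontrivial binomial. Cross-block exchanges are more delicate because the two ranges meet only at the boundary $N \in \{n+k, n+k+1, n+k+2\}$, but log-convexity together with the separation of the two ranges still narrows the candidates to a small finite list.

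Third, at each candidate configuration I would evaluate the right-hand side explicitly and compare with $k\binom{n+k}{k}^{n+2}$. I expect to use identities like $\binom{n+k}{k} = \frac{n+k}{k}\binom{n+k-1}{k-1}$ and the crude bound $\binom{n+k+m}{n} \leq \binom{n+k+m}{k+m}\cdot\frac{(n+1)!\,(k+m)!}{(n+k+m)!/(\ldots)}$ to telescope the product into a form that can be compared factor-by-factor with $\binom{n+k}{k}^{n+2}$, with the leftover $k$ on the left and $m$ on the right arising naturally from the ratios that remain.

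The hard part will be the second step: the constraint $t_j \leq k$ for $j \leq a$ and $t_j > k$ for $j > a$ imposes a discrete threshold across which $t$-values cannot freely migrate, and the linear sum constraint can force the extremal configuration to sit exactly on that threshold (e.g.\ $t_a = k$ or $t_{a+1}=k+1$). I expect to handle this by a case analysis on which $t_j$'s saturate the boundary, together with induction on $n-a$ (the size of the upper block) or on $k$, with a handful of base cases checked by direct calculation. Maintaining strict rather than weak inequality throughout, given that several of the bounds used are quite tight when $m=n+1$ or when most $t_j$'s concentrate near the endpoints, will be the main technical obstacle.
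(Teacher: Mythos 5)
Your plan founders on the convexity direction. The function $N \mapsto \binom{N}{n+1}$ (equivalently $N \mapsto N(N-1)\cdots(N-n)$) is \emph{log-concave} in $N$, not log-convex: one checks $\binom{N+1}{n+1}/\binom{N}{n+1}=(N+1)/(N-n)$ is strictly decreasing in $N$. Consequently a swap that brings two $N$-values closer together \emph{increases} $\prod_j\binom{N_j}{n+1}$, rather than a swap that spreads them apart. So if you try to maximize the right-hand side by pushing the $t_j$'s to the extremes of their ranges, you are driving the product \emph{down}, not up, and the "small finite list of candidates" you would end up checking is not the maximizer. The whole reduction to an extremal configuration collapses.

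The paper sidesteps the "cross-block" delicacy you flag by a reindexing that makes the two blocks look identical: it sets $\lambda_j=t_j$ for $j\le a$ and $\lambda_j=t_j-m$ for $j>a$, observing that $\binom{n+1+m+k-t_j}{m+k-t_j}=\binom{n+1+k-\lambda_j}{k-\lambda_j}$, so the claim becomes a single statement (Lemma \ref{main lemma}) about $\lambda\in\mathbb{Z}^n$ with $\lambda_j\le k$ and $\sum_j\lambda_j=m-k$. It then uses log-concavity (Lemma \ref{lemma rearrangement}) exactly as above to contract all $\lambda_j$ into $[0,k]$, which \emph{increases} the right-hand side. A second, separate monotonicity step (Lemma \ref{lemma F}) — in which $m=k+\sum\lambda_j$ changes along with $\lambda$, a dependence your proposal does not account for — further shows the right-hand side is maximized at $\lambda=e_1$, $m=k+1$. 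The remaining single inequality is then proved not by telescoping binomials but by showing the ratio $L(n,k)$ of the two sides is nondecreasing in $n$ (Lemma \ref{lemma L increasing}) and checking $L(2,k)>1$ directly. You should reverse the sense of your rearrangement step, adopt the $\lambda_j$ reindexing to erase the block boundary, and replace the factor-by-factor comparison with a monotonicity argument of the above type; otherwise the strict inequality at the true extremal point will not be reached.
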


This is a contradiction and we thus conclude that \eqref{KE equation cyclic} does not hold.  We will leave the proof of Lemma \ref{lemma combinatorial 2} to Section \ref{Sec combinatorial lemma}.

\textbf{Subcase III (b).} $k \geq t_n$.

In this case, $|T|+1\leq |T|+t_j\leq |T|+k$ for all $1\leq j\leq n$. Thus we have, by the Taylor expansion,
\begin{equation*}
Q=\prod_{j=2}^n\binom{n+1+k-t_j}{k-t_j}mx^{k-t_j}+\text{h.o.t}.
\end{equation*}
Note that all other terms in \eqref{KE equation cyclic} have the same Taylor expansions as in the case III (b). As before, in order to disprove \eqref{KE equation cyclic}, it is sufficient to verify the following lemma, whose proof is also delayed to Section \ref{Sec combinatorial lemma}.
\begin{lemma}\label{lemma combinatorial 3}
	Let $2\leq k\leq m-1$ and  $n \geq 2.$ Let $1=t_1\leq \cdots \leq t_n\leq k$. If
	\begin{equation*}
	k=m-\sum_{j=1}^nt_j,
	\end{equation*}
	then
	\begin{align*}
	k\,\binom{n+k}{k}^{n+2}>m\,\binom{n+k+m}{n}	\prod_{j=1}^n\binom{n+1+k-t_j}{k-t_j}.
	\end{align*}
\end{lemma}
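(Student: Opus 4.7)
The plan is to reduce the inequality to the extreme case $t_1=\cdots=t_n=1$ by a monotonicity argument in the $t_j$'s, and then establish the resulting binomial inequality by combining monotonicity in $k$ with a limit computation.

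For the first reduction, note that the right-hand side
$\Phi(t_1,\ldots,t_n):=m\binom{n+k+m}{n}\prod_{j=1}^n\binom{n+1+k-t_j}{k-t_j}$ (with $m=k+\sum_j t_j$) is symmetric in $(t_j)$, so the ordering constraint is irrelevant. Incrementing $t_i$ by $1$ multiplies $\Phi$ by $\frac{(m+1)(n+k+m+1)(k-t_i)}{m(k+m+1)(n+k+1-t_i)}$; setting $v=k-t_i\in[0,k-1]$ and clearing denominators, this ratio is $\leq 1$ iff the linear function $p(v):=(n+1)m(k+m+1)-v[(n+1)m+(n+k+1)]$ is nonnegative. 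Since $p$ is strictly decreasing in $v$, it suffices to verify $p(k-1)=(n+1)m(m+2)-(k-1)(n+k+1)\geq 0$, which, using $m\geq k+n$ (forced by $t_j\geq 1$), is immediate. Hence $\Phi$ attains its maximum at $t_1=\cdots=t_n=1$, where $m=k+n$. Using $\binom{n+k}{k-1}=\tfrac{k}{n+1}\binom{n+k}{k}$, the inequality at this extreme simplifies to
\[
(n+1)^n\binom{n+k}{k}^2>(k+n)k^{n-1}\binom{2n+2k}{n}. \qquad (\star)
\]

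Let $h(n,k)$ denote the quotient of the two sides of $(\star)$. I plan to show $h(n,k)$ is strictly decreasing in $k$ and that $\lim_{k\to\infty}h(n,k)=(n+1)^n/(2^n\, n!)$, a quantity exceeding $1$ for every $n\geq 2$ by an easy induction using Bernoulli's inequality $(1+\tfrac{1}{n+1})^{n+1}\geq 2$. Together, these yield $h(n,k)>\lim h>1$ for all $n\geq 2, k\geq 2$, establishing $(\star)$ and hence the lemma.

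The monotonicity $h(n,k+1)\leq h(n,k)$ is equivalent, after clearing factors, to
\[
(n+k)k^{n-1}(n+2k+2)(n+2k+1)\leq 2(k+1)^{n+1}(2n+2k+1).
\]
The key step is the algebraic identity $(n+2k+2)(n+2k+1)=2(k+1)(2n+2k+1)+n(n-1)$, which transforms this into $(n+k)k^{n-1}n(n-1)\leq 2(k+1)(2n+2k+1)\bigl[(k+1)^n-(n+k)k^{n-1}\bigr]$. By the binomial theorem, $(k+1)^n-(n+k)k^{n-1}=\sum_{j\geq 2}\binom{n}{j}k^{n-j}\geq\tfrac{n(n-1)}{2}k^{n-2}$, so it suffices to verify the trivial inequality $(n+k)k\leq(k+1)(2n+2k+1)$ (whose difference equals $k^2+(n+3)k+2n+1>0$). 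The main obstacle is discovering this identity, which converts the monotonicity step into a clean application of the binomial expansion; once it is in hand, the remainder is routine bookkeeping.
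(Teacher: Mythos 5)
Your proof is correct, and it takes a genuinely different route from the paper's. The paper does not prove Lemma \ref{lemma combinatorial 3} directly; instead it proves a strictly more general Lemma (5.4) in which the parameters $\lambda_j$ are allowed to be nonpositive (only $\lambda_j\le k$ is required), because that generality is also needed for Lemma \ref{lemma combinatorial 2}. The paper's monotonicity step (its Lemma 5.6) therefore pushes each $\lambda_j$ all the way down to $0$, leaving a single $\lambda_1=1$, so the extremal case is $m=k+1$; the resulting one-parameter inequality is then handled by showing the relevant ratio $L(n,k)$ is \emph{increasing in $n$} and checking the base case $n=2$ by hand. You, by contrast, respect the constraint $t_j\ge 1$ throughout, so your monotonicity step lands on $t_1=\cdots=t_n=1$ and $m=k+n$, and you dispose of the resulting inequality $(\star)$ by showing $h(n,k)$ is \emph{decreasing in $k$} and comparing with the $k\to\infty$ limit $(n+1)^n/(2^n n!)>1$. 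Both reductions are correct (I checked the ratio $\frac{(m+1)(n+k+m+1)(k-t_i)}{m(k+m+1)(n+k+1-t_i)}$, the linear function $p(v)$, the identity $(n+2k+2)(n+2k+1)=2(k+1)(2n+2k+1)+n(n-1)$, the binomial tail bound, and the limit), and your strict-decrease argument does yield the strict inequality needed. One small imprecision: you write $v=k-t_i\in[0,k-1]$, but the increment step actually requires $t_i\le k-1$, hence $v\ge 1$; this does no harm since you only use $p(k-1)\ge 0$ and $p$ is decreasing. The trade-off is that your argument, while clean and self-contained for Lemma \ref{lemma combinatorial 3}, does not subsume Lemma \ref{lemma combinatorial 2}, whereas the paper's Lemma 5.4 handles both at once.
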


\section{Proof of the Combinatorial lemmas}\label{Sec combinatorial lemma}
In this section, we shall prove Lemmas \ref{lemma combinatorial 1},  \ref{lemma combinatorial 2}, and \ref{lemma combinatorial 3}.
\subsection{Proof of Lemma \ref{lemma combinatorial 1}}
For the reader's convenience, we restate Lemma \ref{lemma combinatorial 1} here.
\begin{lemma}
	Suppose $m,n\geq 2$, $1\leq a\leq n$ and $1=t_1=\cdots=t_a<t_{a+1}\leq \cdots\leq t_n\leq m-1$. If $n+2=(m+1)(n-a+1)-\sum_{j=a+1}^n t_j$, then
	\begin{equation}\label{lemma combinatorial 1 eq}
	(n+1)^{n+1}(m+1)>m\binom{n+m+1}{m}\binom{m+n+2-t_{a+1}}{m+1-t_{a+1}}\cdots \binom{m+n+2-t_{n}}{m+1-t_{n}}.
	\end{equation}
\end{lemma}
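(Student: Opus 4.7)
The plan is to reduce \eqref{lemma combinatorial 1 eq} to a parameter-free master estimate and then verify the latter by a short extremality argument. Set $v_0 := m$ and $v_j := m+1-t_j$ for $j = a+1,\ldots,n$. A direct manipulation of the hypothesis $n+2 = (m+1)(n-a+1) - \sum_{j=a+1}^n t_j$ shows $\sum_{i=0}^{n-a} v_i = n+1$, and each $v_i \geq 2$ since $m \geq 2$ and $t_j \leq m-1$. Moreover, using $\binom{n+m+1}{m} = \binom{n+1+v_0}{n+1}$ and $\binom{m+n+2-t_j}{m+1-t_j} = \binom{n+1+v_j}{n+1}$, the right-hand side of the desired inequality becomes exactly $m \prod_{i=0}^{n-a} \binom{n+1+v_i}{n+1}$. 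The lemma therefore reduces at once to the \emph{master estimate}
\begin{equation*}
\prod_i \binom{n+1+v_i}{n+1} \leq (n+1)^{n+1},
\end{equation*}
valid for any composition $\sum v_i = n+1$ with parts $v_i \geq 2$, because then $m\prod \leq m(n+1)^{n+1} < (m+1)(n+1)^{n+1}$.

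To prove the master estimate, set $f(k) := \binom{n+1+k}{n+1}$, and I would show that the maximum of $\prod_i f(v_i)$ over such compositions is attained at the ``most split'' configuration: all parts equal to $2$ if $n+1$ is even, or all parts equal to $2$ except one equal to $3$ if $n+1$ is odd. Two monotonicity observations drive the reduction. First, for $v_1, v_2 \geq 1$,
\begin{equation*}
\frac{f(v_1)f(v_2)}{f(v_1+v_2)} = \prod_{i=1}^{v_1} \frac{(n+1+i)(v_2+i)}{i(n+1+v_2+i)} > 1,
\end{equation*}
since the numerator of the $i$-th factor exceeds the denominator by exactly $(n+1)v_2 > 0$. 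Consequently, any part $v_i \geq 4$ can be strictly profitably split into $2 + (v_i-2)$. Second, $f(2)^3 / f(3)^2 = 9(n+2)(n+3)/[2(n+4)^2] > 1$ for $n \geq 1$, so any pair of $3$'s can be replaced by three $2$'s with a strict increase in the product. Iterating these two moves terminates at the extremal configuration described above.

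It remains to verify the master estimate at the extremal configuration, which reduces to elementary polynomial inequalities in $n$. If $n$ is odd with $n \geq 3$, the extremal product is $[(n+2)(n+3)/2]^{(n+1)/2}$, and after taking the $2/(n+1)$-th root the bound becomes $2(n+1)^2 \geq (n+2)(n+3)$, i.e., $n^2-n-4 \geq 0$, valid for $n \geq 3$. If $n$ is even with $n \geq 4$, the extremal product is $[(n+2)(n+3)/2]^{(n-2)/2} \cdot (n+2)(n+3)(n+4)/6$; writing $(n+1)^{n+1} = (n+1)^{n-2}(n+1)^3$, the bound factors into $(n+1)^2 \geq (n+2)(n+3)/2$ (matching the ``$2$'' parts) and $6(n+1)^3 \geq (n+2)(n+3)(n+4)$ (matching the single ``$3$''), the latter reducing to $5n^3 + 9n^2 - 8n - 18 \geq 0$, which holds for $n \geq 2$. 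Finally, the boundary case $n=2$ forces $a=n$ and $m = n+1 = 3$, so the claim collapses to the numerical inequality $27 > 20$.

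The main obstacle is carrying out the two-step reduction to the extremal configuration in a clean, bookkeeping-free way; one must ensure that iterating the moves does not get stuck at a non-extremal state. This is guaranteed here because after step~(i) the only parts present are $2$'s and $3$'s, after which step~(ii) reduces the $3$'s down to at most one. Once the reduction is in hand, verifying the master estimate is entirely routine polynomial algebra.
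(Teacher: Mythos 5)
Your proof is correct, and it follows a genuinely different route from the paper's. Both ultimately rest on the same reformulation: writing $v_0=m$ and $v_j=m+1-t_j$ for $j>a$, the hypothesis gives $\sum_i v_i=n+1$ with each $v_i\geq 2$, and the right-hand side of the lemma equals $m\prod_i\binom{n+1+v_i}{n+1}$, so it suffices to prove the master estimate $\prod_i\binom{n+1+v_i}{n+1}\leq(n+1)^{n+1}$. The paper proves this via the \emph{per-factor} bound $\binom{n+k}{k-1}<(n+1)^{k-1}$ for $n,k\geq 3$ (equivalently, $\binom{n+1+v}{n+1}<(n+1)^v$ for $v\geq 2$, $n\geq 3$), from which the product estimate follows at once upon multiplying and using $\sum_i v_i=n+1$; since this per-factor bound fails for $n=2$ (indeed $\binom{5}{2}=10>9=3^2$), the paper disposes of $n=2$ separately, noting the hypothesis forces $a=2$, $m=3$, and checking $108>60$. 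You instead prove the master estimate \emph{globally} by a rearrangement argument: the strict submultiplicativity $\binom{n+1+v_1}{n+1}\binom{n+1+v_2}{n+1}>\binom{n+1+v_1+v_2}{n+1}$ together with $\binom{n+3}{2}^3>\binom{n+4}{3}^2$ locate the maximum over compositions of $n+1$ into parts $\geq 2$ at the all-$2$'s (resp.\ all-$2$'s-plus-one-$3$) configuration, where the bound reduces to the polynomial inequalities $2(n+1)^2\geq(n+2)(n+3)$ and $6(n+1)^3\geq(n+2)(n+3)(n+4)$. Your route is heavier in bookkeeping but treats $n=2$ uniformly as the degenerate composition $(3)$ and makes the extremal structure of the binomial product transparent; the paper's per-factor bound is shorter once found, but requires carving out the small-$n$ case.
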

\begin{proof}
	We divide the proof into two cases.

\textbf{Case I.} $n=2$.

In this case, by the assumption of Lemma \ref{lemma combinatorial 1}, we have $4=(m+1)(3-a)-\sum_{j=a+1}^n t_j$ and $1\leq a\leq 2$.

Suppose $a=1$. Then $2(m+1)=4+t_2\leq m+3$, which yields $m\leq 1$. This contradicts the assumption $m\geq 2$. Thus we have $a=2$. It follows that $t_2=1$ and $m=3$. A straightforward computation shows
\begin{align*}
	\text{LHS of \eqref{lemma combinatorial 1 eq}}=108>60=\text{RHS of \eqref{lemma combinatorial 1 eq}}.
\end{align*}
So this case is verified.

\textbf{Case II.} $n\geq 3$.

We first prove the following elementary combinatorial inequality, which will be used in the proof.
\begin{lemma}
	For any integers $n, k\geq 3$, we have
	\begin{equation}\label{elementary inequality in Case I}
		\binom{n+k}{k-1}<(n+1)^{k-1}.
	\end{equation}
\end{lemma}

\begin{proof}
	\begin{align*}
		\binom{n+k}{k-1}\cdot (n+1)^{-(k-1)}=\prod_{t=1}^{k-1}\frac{(n+1+t)}{t\cdot (n+1)}=\frac{(n+2)(n+3)}{2(n+1)^2} \cdot \prod_{t=3}^{k-1}\frac{(n+1+t)}{t\cdot (n+1)}.
	\end{align*}
	Since $n\geq 3$,
	\begin{align*}
		2(n+1)^2-(n+2)(n+3)=n^2-n-4\geq 2>0,
	\end{align*}
which implies that
$$
\frac{(n+2)(n+3)}{2(n+1)^2}<1
$$
	When $t\geq 3$,
	\begin{align*}
		t(n+1)-(n+1+t)=n(t-1)-1\geq 2n-1>0,
	\end{align*}
which implies that
$$
\frac{(n+1+t)}{t\cdot (n+1)}<1.
$$
	The result therefore follows.
\end{proof}

Recall that $1\leq t_j\leq m-1$ for any $1\leq j\leq n$. By applying \eqref{elementary inequality in Case I} with $k=m+2-t_j$, we get
\begin{equation*}
	\binom{m+n+2-t_j}{m+1-t_j}<(n+1)^{(m+1-t_j)}.
\end{equation*}
Thus,
\begin{align*}
	\text{RHS of \eqref{lemma combinatorial 1 eq}}<m(n+1)^{m+\sum_{j=a+1}^n(m+1-t_j)}=m(n+1)^{(n+1)}<\text{LHS of \eqref{lemma combinatorial 1 eq}}.
\end{align*}
So the proof is complete also in Case II.
\end{proof}

\subsection{Proof of Lemma \ref{lemma combinatorial 2} and Lemma \ref{lemma combinatorial 3}}

We will prove a slightly more general result.
\begin{lemma}\label{main lemma}
	Let $k, m, n$ be integers such that $1\leq k\leq m-1$ and $n \geq 2$. Let $\lambda=(\lambda_1,\cdots \lambda_n)\in \mathbb{Z}^n$ satisfy $\lambda_j\leq k$ for each $1\leq j\leq n$. If
	\begin{equation*}
		m-k=\sum_{j=1}^n\lambda_j,
	\end{equation*}
	then
	\begin{align}\label{main lemma eq}
	k\,\binom{n+k}{k}^{n+2}>m\,\binom{n+k+m}{n}	\prod_{j=1}^n\binom{n+1+k-\lambda_j}{k-\lambda_j}.
	\end{align}
\end{lemma}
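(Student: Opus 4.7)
The right-hand side of \eqref{main lemma eq} depends on $(\lambda_1,\dots,\lambda_n)$ only through $P(\lambda):=\prod_j f(\lambda_j)$, where $f(\lambda):=\binom{n+1+k-\lambda}{n+1}$. My approach is to reduce to a single ``balanced'' configuration of the $\lambda_j$ by a smoothing/exchange argument, and then verify the inequality at that configuration by direct binomial estimates. Note first that each $\lambda_j$ is bounded above by $k$ and below by $m-nk$ (from $\sum_i\lambda_i=m-k$ together with $\lambda_i\leq k$), so the admissible set is finite. A short calculation with binomial ratios, writing $s:=k-\lambda$, shows that $f$ is strictly log-concave on $\mathbb{Z}$:
\begin{equation*}
\frac{f(\lambda-1)f(\lambda+1)}{f(\lambda)^2}=\frac{s(n+2+s)}{(s+1)(n+1+s)}=1-\frac{n+1}{(s+1)(n+1+s)}<1.
\end{equation*}

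A standard exchange argument now applies. If $\lambda_i\leq\lambda_j-2$ in some admissible tuple, replacing $(\lambda_i,\lambda_j)$ by $(\lambda_i+1,\lambda_j-1)$ preserves both the sum $\sum_j\lambda_j=m-k$ and the bound $\lambda_j\leq k$ (only the smaller entry is raised), while strictly increasing $P(\lambda)$: the ratio of the new product to the old simplifies to $\frac{(k-\lambda_i)(n+2+k-\lambda_j)}{(n+1+k-\lambda_i)(k+1-\lambda_j)}$, which exceeds $1$ precisely when $k-\lambda_i>k-\lambda_j+1$. Iterating, $P(\lambda)$ attains its maximum when $|\lambda_i-\lambda_j|\leq 1$ for all $i,j$. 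Writing $m-k=nq+r$ with $0\leq r<n$, the balanced tuple has $r$ entries equal to $q+1$ and $n-r$ entries equal to $q$; it therefore suffices to verify \eqref{main lemma eq} in this case.

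Setting $p:=k-q$, the balanced inequality reads
\begin{equation*}
k\binom{n+k}{k}^{n+2}>m\binom{n+k+m}{n}\binom{n+p}{n+1}^{r}\binom{n+p+1}{n+1}^{n-r}.
\end{equation*}
I would first dispose of the boundary case $p=0$, which forces $r=0$, $m=(n+1)k$, and all $\lambda_j=k$; the balanced inequality then reduces to $\binom{n+k}{k}^{n+2}>(n+1)\binom{(n+2)k+n}{n}$, provable by elementary estimates (the left side grows polynomially of degree $n(n+2)$ in $k$ against only degree $n$ on the right, with small-$(n,k)$ cases checked by hand). For $p\geq 1$, I would use $\binom{n+p+1}{n+1}=\frac{n+p+1}{p}\binom{n+p}{n+1}$ to collect the two types of factors into a single $\binom{n+p}{n+1}$, and, with the relations $q=k-p$ and $m=(n+1)k-np+r$, compare the polynomial factors on both sides term-by-term.

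The main obstacle will be completing this last step, where several integer parameters $(n,k,m,p,q,r)$ must be managed simultaneously under multiple constraints. My expectation is that a careful subcase analysis (e.g., $p=1$ vs.\ $p\geq 2$, and $r=0$ vs.\ $r\geq 1$) combined with telescoping estimates of binomial ratios will suffice; a natural fallback is induction on $n$, with the base case $n=2$ handled by direct computation analogous to Case I of Lemma \ref{lemma combinatorial 1}.
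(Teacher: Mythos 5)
Your smoothing reduction is sound as far as it goes: for fixed $m,k,n$ the right-hand side of \eqref{main lemma eq} depends on $\lambda$ only through $\prod_j\binom{n+1+k-\lambda_j}{n+1}$, your log-concavity computation for $f(\lambda)=\binom{n+1+k-\lambda}{n+1}$ is correct, and the exchange step does show the product is maximized at the balanced tuple (the raised entry stays $\leq k$, and the admissible set is finite so the maximum is attained). The genuine gap is exactly where you flag it: the inequality at the balanced configuration is never proved. What remains is still a multi-parameter inequality in $n$, $k$ and $q=\lfloor (m-k)/n\rfloor$ (plus the remainder $r$), and nothing in the sketch (``compare the polynomial factors term-by-term,'' ``a careful subcase analysis will suffice'') controls it; even the boundary case $p=0$ is only asserted. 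The difficulty is structural rather than cosmetic: balancing maximizes the product for \emph{fixed} $m$, so it cannot eliminate the $m$-dependence, and the growth of the prefactor $m\binom{n+k+m}{n}$ in $m$ still has to be played off against the product. The hardest part of the lemma is therefore missing.

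For comparison, the paper smooths in a different direction. It uses the same log-concavity (its Lemma \ref{lemma rearrangement}) only to make all $\lambda_j\geq 0$, and then proves (Lemma \ref{lemma F}) that decreasing any positive $\lambda_{j_1}$ by $1$ --- which simultaneously decreases $m=k+\sum_j\lambda_j$ by $1$ --- does not decrease the right-hand side. Iterating collapses every admissible configuration to the single endpoint $\lambda=e_1$, $m=k+1$, leaving the clean two-variable inequality $L(n,k)>1$, which is settled by showing $L$ is nondecreasing in $n$ and computing $L(2,k)$ explicitly. To finish your argument you would need an analogue of that $m$-monotonicity (or carry out the postponed multi-parameter estimate); absent that, the proposal is a correct reduction followed by an unproved claim.
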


Clearly, Lemma \ref{lemma combinatorial 2} follows from \ref{main lemma} by taking $(\lambda_1,\cdots,\lambda_n)=(t_1,\cdots,t_a,t_{a+1}-m,\cdots, t_n-m)$. Lemma \ref{lemma combinatorial 3} follows from \ref{main lemma} by taking $(\lambda_1,\cdots,\lambda_n)=(t_1,\cdots,t_n)$.

\begin{proof}[Proof of Lemma \ref{main lemma}]
	We divide the proof into several steps.
	
\textbf{Step 1.} We show that it is actually sufficient to prove \eqref{main lemma eq} for $0\leq \lambda_j\leq k$ for all $1\leq j\leq n$.

We begin this step with the following elementary combinatorial lemma.
\begin{lemma}\label{lemma rearrangement}
	Let $n\in \mathbb{N}$ and let $s,t$ be integers such that $s+1<t\leq k$. Then we have
	\begin{equation}\label{lemma rearrangement eq}
		\binom{n+1+k-s}{k-s}\binom{n+1+k-t}{k-t}< \binom{n+1+k-(s+1)}{k-(s+1)}\binom{n+1+k-(t-1)}{k-(t-1)}.
	\end{equation}
\end{lemma}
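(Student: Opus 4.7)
The plan is to reduce the inequality to a monotonicity statement for the ratio of consecutive values of the function
$$f(j) := \binom{n+1+k-j}{k-j}.$$
Dividing both sides of \eqref{lemma rearrangement eq} by $f(s)f(t-1)$ (both positive), the claim is equivalent to
$$\frac{f(t)}{f(t-1)} < \frac{f(s+1)}{f(s)}.$$
So the first step is to compute this ratio explicitly. A direct cancellation of factorials gives
$$\frac{f(j+1)}{f(j)} = \frac{\binom{n+k-j}{k-j-1}}{\binom{n+1+k-j}{k-j}} = \frac{k-j}{n+1+k-j}.$$

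Writing $g(x) = x/(n+1+x)$, the desired inequality becomes $g(k-t+1) < g(k-s)$. The second step is to verify that $g$ is strictly increasing wherever the arguments land: since
$$g'(x) = \frac{n+1}{(n+1+x)^2} > 0$$
for $x > -(n+1)$, and since $t \leq k$ forces $k-t+1 \geq 1$ while $s \leq t-2 \leq k-2$ forces $k - s \geq 2$, both arguments are positive and $g$ is strictly monotone there.

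The third and final step is to compare the two arguments. The hypothesis $s+1 < t$ together with integrality of $s,t$ yields $s \leq t-2$, hence
$$k - s \;\geq\; k - t + 2 \;>\; k - t + 1.$$
Applying the strictly increasing function $g$ gives $g(k-t+1) < g(k-s)$, i.e.\ $f(t)/f(t-1) < f(s+1)/f(s)$, which is the rearranged form of \eqref{lemma rearrangement eq}. There is no real obstacle here; the only subtlety is confirming that the arguments of $g$ remain in the increasing range, and this is immediate from the hypothesis $t \leq k$.
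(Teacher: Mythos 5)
Your proof is correct and rests on the same core computation as the paper's: both reduce the inequality to comparing the ratios $f(j+1)/f(j)=\tfrac{k-j}{n+1+k-j}$ at $j=s$ and $j=t-1$, the paper by cross-multiplying the product $\tfrac{n+1+k-s}{k-s}\cdot\tfrac{k+1-t}{n+2+k-t}$ against $1$, and you by noting that this ratio is a monotone function $g$ of $k-j$. Your reformulation via the increasing function $g$ (equivalently, log-concavity of $f$) is a slightly cleaner way to present the identical arithmetic.
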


\begin{proof}
	A straightforward computation gives
	\begin{align*}
		\frac{\text{LHS of }\eqref{lemma rearrangement eq}}{\text{RHS of }\eqref{lemma rearrangement eq}}
		=\frac{n+1+k-s}{k-s} \cdot \frac{k+1-t}{n+2+k-t}.
	\end{align*}
	Note that
	\begin{align*}
		(n+1+k-s)(k+1-t)-(k-s)(n+2+k-t)=(n+1)(s+1-t)< 0.
	\end{align*}
	The result thus follows immediately.
\end{proof}

Now we fix $m,n,k$ and apply Lemma \ref{lemma rearrangement} to the product $\prod_{j=1}^n \binom{n+1+k-\lambda_j}{k-\lambda_j}$ in the right hand side of \eqref{main lemma eq}. Suppose $\lambda_{j_1}<0$ for some $1\leq j_1\leq n$. Since $\sum_{j=1}^n\lambda_j=m-k\geq 1$, there is some $1\leq j_2\leq n$ such that $\lambda_{j_2}>0$. We change $\binom{n+1+k-\lambda_{j_1}}{k-\lambda_{j_1}}\binom{n+1+k-\lambda_{j_2}}{k-\lambda_{j_2}}$ to $\binom{n+1+k-(\lambda_{j_1}+1)}{k-(\lambda_{j_1}+1)}\binom{n+1+k-(\lambda_{j_2}-1)}{k-(\lambda_{j_2}-1)}$, i.e., use $\lambda_{j_1}+1$ as the new $\lambda_{j_1}$ and use $\lambda_{j_2}-1$ as the new $\lambda_{j_2}$. Then the sum $\sum_{j=1}^n \lambda_j$ is still equal to $m-k$, and the value of the right hand side of \eqref{main lemma eq} becomes larger. We keep doing this if there is some $\lambda_j<0$ for some $1\leq j \leq n$. Then we finally get $0\leq \lambda_j\leq k$ for all $1\leq j\leq n$ and $m-k=\sum_{j=1}^n\lambda_j$ still holds; and the process will not make the value of the right hand side of \eqref{main lemma eq} smaller. So we only need to prove \eqref{main lemma eq} with the additional condition $\lambda_j\geq 0$ for all $1\leq j \leq n$.

From now on, we will assume $\lambda_j\geq 0$ for all $1\leq j\leq n$. As $\sum_{j=1}^n\lambda_j=m-k\geq 1$, without loss of generality, we can further assume $\lambda_1\geq 1$.

\textbf{Step 2.} We show that it is actually sufficient to prove \eqref{main lemma eq} for $\lambda_1=1$ and $\lambda_2=\cdots=\lambda_n=0$.

For simplicity, we denote the right hand side of \eqref{main lemma eq} by $F$:
\begin{equation}\label{F defn eq}
F(n,k,\lambda):=m\binom{n+k+m}{n}	\prod_{j=1}^n\binom{n+1+k-\lambda_j}{k-\lambda_j},
\end{equation}
where $m=k+\sum_{j=1}^n\lambda_j$ and $\lambda=(\lambda_1,\cdots,\lambda_n)$.
The function $F$ has the following property.
\begin{lemma}\label{lemma F}
	Suppose $n, k \geq 1$ and $\lambda=(\lambda_1,\cdots,\lambda_n) \in \mathbb{Z}^n$ with $0\leq \lambda_j\leq k$ for each $1\leq j\leq n$. If $\lambda_{j_1}\geq 1$ for some $1\leq j_1\leq n$, then
	\begin{equation}\label{lemma F eq}
		F(n,k,\lambda)\leq F(n,k,\lambda-e_{j_1}),
	\end{equation}
	where $e_{j_1}=(0,\cdots,0,1,0\cdots,0)$ is the unit vector along the $j_1$-th direction in $\bR^n$.
	
	Consequently, if $\lambda_1\geq 1$, and all other  $\lambda_j'$s are nonnegative, then
	\begin{equation*}
		F(n,k,\lambda)\leq F(n,k,e_1).
	\end{equation*}
\end{lemma}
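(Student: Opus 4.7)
The plan is to reduce the first assertion to a single elementary inequality in $m$, $k$, $l := \lambda_{j_1}$, and $n$, and then to deduce the ``consequently'' statement by straightforward iteration.

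First I would compute the ratio $F(n,k,\lambda)/F(n,k,\lambda - e_{j_1})$ explicitly. Only three factors change when $\lambda_{j_1}$ drops by $1$: the leading factor $m$ becomes $m-1$, the binomial $\binom{n+k+m}{n}$ becomes $\binom{n+k+m-1}{n}$, and $\binom{n+1+k-\lambda_{j_1}}{k-\lambda_{j_1}}$ becomes $\binom{n+2+k-\lambda_{j_1}}{k-\lambda_{j_1}+1}$. Using the elementary identities $\binom{n+k+m}{n}/\binom{n+k+m-1}{n}=(n+k+m)/(k+m)$ and $\binom{n+2+k-l}{k-l+1}/\binom{n+1+k-l}{k-l}=(n+2+k-l)/(k-l+1)$, the inequality $F(n,k,\lambda)\le F(n,k,\lambda - e_{j_1})$ becomes
\begin{equation*}
(m-1)(k+m)(n+2+k-l)\;\ge\; m(n+k+m)(k-l+1),
\end{equation*}
where $m=k+\sum_j\lambda_j\ge k+l$ and $1\le l\le k$.

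Next I would verify this polynomial inequality directly. Moving everything to one side and collecting in $m$, the difference becomes the quadratic
\begin{equation*}
f(m)\;=\;(n+1)m^2+(n+1)(l-2)m-k(n+k-l+2).
\end{equation*}
Its derivative $f'(m)=(n+1)(2m+l-2)$ is positive for every $m\ge 1$ once $l\ge 1$, so it suffices to check $f(k+l)\ge 0$. A short computation gives $f(k+l)=(n+1)(k+l)(k+2l-2)-k(n+k-l+2)$; when $l=1$ this simplifies to $nk^2\ge 0$, and when $l\ge 2$ one has $(n+1)(k+l)\ge n+k-l+2$ trivially, so $(n+1)(k+l)(k+2l-2)\ge k(n+k-l+2)$. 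This is the only step that requires any actual computation and is the main (mild) obstacle; everything else is bookkeeping.

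For the ``consequently'' statement, I would iterate the first inequality. Starting from $\lambda=(\lambda_1,\dots,\lambda_n)$ with $\lambda_1\ge 1$ and $\lambda_j\ge 0$ otherwise, successively apply the inequality to decrease $\lambda_n,\lambda_{n-1},\dots,\lambda_2$ down to $0$, and then decrease $\lambda_1$ down to $1$. At each step the component being decreased is $\ge 1$ by construction, so the hypothesis of the first part applies, and $F$ is non-decreasing along the chain. The final state is $e_1$, yielding $F(n,k,\lambda)\le F(n,k,e_1)$.
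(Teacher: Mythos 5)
Your proof is correct and follows essentially the same route as the paper: both reduce $F(n,k,\lambda)\le F(n,k,\lambda-e_{j_1})$ to the same elementary product inequality $m(n+k+m)(k-l+1)\le (m-1)(k+m)(n+2+k-l)$ and verify it by a monotonicity argument, the paper by noting the right-hand side of its fractional form is increasing in $l=\lambda_{j_1}$ (reducing to $l=1$ and using $m\ge k+1$), you by noting the difference is an increasing quadratic in $m$ (reducing to $m=k+l$ and splitting on $l=1$ versus $l\ge 2$). Both computations check out, including your value $f(k+l)=(n+1)(k+l)(k+2l-2)-k(n+k-l+2)$ and the iteration for the ``consequently'' clause.
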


\begin{proof}
	We cancel the common combinatorial factors in \eqref{lemma F eq}, and write it as
	\begin{equation*}
	m \binom{n+k+m}{n} \binom{n+1+k-\lambda_{j_1}}{k-\lambda_{j_1}}\leq(m-1) \binom{n+k+m-1}{n}\binom{n+2+k-\lambda_{j_1}}{k-\lambda_{j_1}+1},
	\end{equation*}
	where $m=k+\sum_{j=1}^n\lambda_j$.
	
	By expanding the remaining combinatorial terms and further canceling common factors, we deduce that \eqref{lemma F eq} is equivalent to
	\begin{equation*}
	m\cdot \frac{n+k+m}{k+m}\leq (m-1)\cdot\frac{n+2+k-\lambda_{j_1}}{k-\lambda_{j_1}+1}.
	\end{equation*}
	Clearly, the right hand side is increasing with respect to $1\leq \lambda_{j_1}\leq k$. Thus, it is sufficient to prove
	\begin{equation}\label{lemma F eq simplified}
	m\cdot \frac{n+k+m}{k+m}\leq (m-1)\cdot\frac{n+k+1}{k},
	\end{equation}	
	A straightforward computation shows that
	\begin{align*}
	\eqref{lemma F eq simplified} &\iff m\cdot \frac{n}{k+m}+1\leq (m-1)\cdot \frac{n+1}{k}
	\\
	&\iff mnk+k^2+mk\leq mnk+mk-nk-k+(m^2-m)(n+1)
	\\
	&\iff k^2+nk+k\leq (m^2-m)(n+1).
	\end{align*}
	The last inequality follows immediately by the fact $m=k+\sum_{j=1}^n\lambda_j\geq k+1$. So the proof is finished.
	
\end{proof}

Thus, with the help of Lemma \ref{lemma F}, it suffices to prove \eqref{main lemma eq} for $\lambda_1=1$, $\lambda_2=\cdots=\lambda_n=0$ and $m=k+1$. That is, we only need to show that
\begin{equation} \label{main lemma simplified eq}
k\,\binom{n+k}{k}^{n+2}>(k+1)\,\binom{n+2k+1}{n}\binom{n+k}{k-1}	\binom{n+k+1}{k}^{n-1}.
\end{equation}

\textbf{Step 3.} We complete the proof of Lemma \ref{main lemma}, by proving \eqref{main lemma simplified eq} for any $n\geq 2$, $k \geq 1$.

Let us further simplify \eqref{main lemma simplified eq} to the following equivalent inequalities.
\begin{align*}
\eqref{main lemma simplified eq} &\iff k\cdot \frac{(n+k)!^{2}}{n!^{2}k!^{2}}>(k+1)\cdot\frac{(n+2k+1)!}{n!(2k+1)!}\cdot \frac{k}{n+1}\cdot \frac{(n+k+1)^{n-1}}{(n+1)^{n-1}}
\\
&\iff
\frac{(n+1)^n}{(k+1)(n+k+1)^{n-1}}\cdot \frac{(n+k)!^2(2k+1)!}{n!k!^2(n+2k+1)!}>1.
\end{align*}

Denote the left hand side term in the last inequality by $L(n,k)$, i.e.,
\begin{equation*}
	L(n,k):=\frac{(n+1)^n}{(k+1)(n+k+1)^{n-1}}\cdot \frac{(n+k)!^2(2k+1)!}{n!k!^2(n+2k+1)!}.
\end{equation*}
It remains to prove $L(n,k)>1$ for $n\geq 2$, $k \geq 1$.

\begin{lemma}\label{lemma L increasing}
	Given nonnegative integers $n,k$, we have
	\begin{equation}
		L(n,k)\leq L(n+1,k).
	\end{equation}
\end{lemma}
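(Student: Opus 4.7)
The plan is to form the ratio $L(n+1,k)/L(n,k)$ directly from the definition, massage it into a form where Bernoulli's inequality applies, and then invoke Bernoulli. Since $L$ is defined by a clean combinatorial formula, the ratio telescopes nicely, so the computation is elementary but requires careful bookkeeping.

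First I would write
\[
L(n,k) = \frac{(n+1)^n}{(k+1)(n+k+1)^{n-1}} \cdot \frac{(n+k)!^2 (2k+1)!}{n!\, k!^2 (n+2k+1)!},
\]
and compute $L(n+1,k)/L(n,k)$ term by term. The factors $k+1$, $(2k+1)!$, and $k!^2$ cancel immediately. Using $(n+k+1)!/(n+k)! = n+k+1$, $(n+1)!/n! = n+1$, and $(n+2k+2)!/(n+2k+1)! = n+2k+2$, the ratio simplifies to
\[
\frac{L(n+1,k)}{L(n,k)} = \left(\frac{n+2}{n+1}\right)^{n+1} \cdot \frac{(n+k+1)^{n+1}}{(n+k+2)^{n}\,(n+2k+2)}.
\]
Next I would exploit the algebraic identity $(n+2)(n+k+1) - (n+1)(n+k+2) = k$, which yields
\[
\frac{(n+2)(n+k+1)}{(n+1)(n+k+2)} = 1 + u, \qquad u := \frac{k}{(n+1)(n+k+2)} \geq 0,
\]
and similarly $\frac{n+2k+2}{n+k+2} = 1 + \frac{k}{n+k+2} = 1 + (n+1)u$. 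Pairing these two observations collapses the ratio to
\[
\frac{L(n+1,k)}{L(n,k)} = \frac{(1+u)^{n+1}}{1 + (n+1)u}.
\]

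Finally, Bernoulli's inequality $(1+u)^{n+1} \geq 1 + (n+1)u$, valid since $u \geq 0$ and $n+1 \geq 1$, immediately gives $L(n+1,k) \geq L(n,k)$, with equality precisely when $k=0$. The only step that requires any real care is the initial algebraic simplification of the ratio; once it is in the shape $(1+u)^{n+1}/(1+(n+1)u)$, Bernoulli closes the argument in one line. No finer inequalities or case analyses are needed.
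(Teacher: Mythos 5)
Your proof is correct, and it takes a genuinely different (and arguably cleaner) route than the paper. Both you and the paper start by forming the same ratio $Q(n,k) = L(n+1,k)/L(n,k)$ and arriving at
\[
Q(n,k) = \left(\frac{n+2}{n+1}\right)^{n+1}\frac{(n+k+1)^{n+1}}{(n+k+2)^n(n+2k+2)},
\]
but from there the methods diverge. The paper treats $k$ as a real variable, computes the logarithmic derivative $\partial_k \log Q$, simplifies it to $\frac{nk}{(n+k+1)(n+k+2)(n+2k+2)} \geq 0$, and concludes $Q(n,k) \geq Q(n,0) = 1$ by monotonicity. You instead observe the identities
\[
\frac{(n+2)(n+k+1)}{(n+1)(n+k+2)} = 1+u, \qquad \frac{n+2k+2}{n+k+2} = 1+(n+1)u, \qquad u = \frac{k}{(n+1)(n+k+2)},
\]
which collapse $Q(n,k)$ to $(1+u)^{n+1}/\bigl(1+(n+1)u\bigr)$, and then a single application of Bernoulli's inequality finishes the proof. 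Your argument is purely algebraic and avoids the calculus step entirely, which is a modest gain in elementariness; it also makes the equality case $k=0$ immediate. The paper's calculus route is perhaps slightly more mechanical but has the advantage of not requiring one to spot the substitution that makes Bernoulli applicable. Both verifications are correct and both establish the same conclusion $Q(n,k) \geq 1$.
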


\begin{proof}
	Set $Q(n,k):=L(n+1,k)/L(n,k)$. Then
	\begin{equation*}
		Q(n,k)=\frac{(n+2)^{n+1}}{(n+1)^{n+1}} \cdot\frac{(n+k+1)^{n+1}}{(n+k+2)^n(n+2k+2)}.
	\end{equation*}
	Regarding $k$ as a real variable in $[0,\infty)$, we take the logarithmic derivative of $Q(n,k)$ with respect to $k$:
	\begin{align*}
		\frac{\partial\log Q(n,k)}{\partial k}=&\frac{n+1}{n+k+1}-\frac{n}{n+k+2}-\frac{2}{n+2k+2}
		\\
		=&\frac{2n+k+2}{(n+k+1)(n+k+2)}-\frac{2}{n+2k+2}
		\\
		=&\frac{nk}{(n+k+1)(n+k+2)(n+2k+2)}.
	\end{align*}
	It follows that for given $n\geq 0$, $Q(n,k)$ is increasing with respect to $k\geq 0$. Thus, for $n,k\geq 0$, we have
	\begin{equation*}
		Q(n,k)\geq Q(n,0)=1.
	\end{equation*}
	That is the desired result.
\end{proof}

Now, for $n \geq 2, k \geq 1$, Lemma \ref{lemma L increasing} yields that
\begin{align*}
	L(n,k)\geq L(2,k)=&\frac{3^2}{(k+1)(k+3)}\cdot \frac{(k+2)!^2(2k+1)!}{2! k!^2 (2k+3)!}
	\\
	=&\frac{9(k+1)^2(k+2)^2}{2(k+1)(k+3)(2k+2)(2k+3)}
	\\
	=&\frac{9(k^2+4k+4)}{4(2k^2+9k+9)}>1.
\end{align*}
The proof of Lemma \ref{main lemma} is now complete.
\end{proof}

\bibliographystyle{plain}
\bibliography{references}

\end{document}